

\documentclass[a4paper, 10pt, conference]{ieeeconf}      

\usepackage[utf8x]{inputenc} 
\usepackage{hyperref}       
\usepackage{url}            
\usepackage{booktabs}       
\usepackage{nicefrac}       
\usepackage{microtype}

\usepackage{amsmath,amsfonts,amssymb,amsthm,mathtools}
\usepackage{subfigure}

\usepackage{cmap}					
\usepackage{mathtext} 		
\usepackage{wrapfig}
\usepackage{graphicx}

\usepackage{mathabx}
\usepackage{algorithm}
\usepackage{algorithmic}
\usepackage{array}
\usepackage{mdwmath}
\usepackage{mdwtab}
\usepackage{eqparbox}

\usepackage{comment}

\usepackage[percent]{overpic}

\usepackage{cite}

\newtheorem{theorem}{Theorem}
\newtheorem{lemma}[theorem]{Lemma}

\usepackage{algorithm}
\usepackage{algorithmic}

\renewcommand{\leq}{\leqslant}
\renewcommand{\geq}{\geqslant}
\newcommand{\argmin}{\operatornamewithlimits{argmin}}
\renewcommand{\min}{\operatornamewithlimits{min}}
\newcommand{\argmax}{\operatornamewithlimits{argmax}}
\def\eps{\varepsilon}
\newcommand{\mc}[1]{\mathbb{\item1}}

\def\la{\langle}
\def\ra{\rangle}
\def\R{\mathbb{R}}

\DeclareMathOperator{\spn}{span}

\DeclareMathOperator{\vectr}{vec}

\def\one{\mathbf{1}}
\def\vp{\varphi}
\def\e{\varepsilon}

\usepackage{xcolor}

\IEEEoverridecommandlockouts                              

\overrideIEEEmargins                                      




\title{\LARGE \bf
Multimarginal Optimal Transport by \\Accelerated Alternating Minimization
}

\author{Nazarii Tupitsa, Pavel Dvurechensky, Alexander Gasnikov, C\'esar A. Uribe
	\thanks{The work of P. Dvurechensky, A. Gasnikov, and N. Tupitsa in part III-A -- III-C, IV was funded by Russian Science Foundation (project 18-71-10108). The work of C.A. Uribe and A. Gasnikov in part III-D was partially funded by the Yahoo! Faculty Engagement Program.
	The work of P. Dvurechensky in part II was funded by the Deutsche Forschungsgemeinschaft (DFG, German Research Foundation) under Germany's Excellence Strategy – The Berlin Mathematics Research Center MATH+ (EXC-2046/1, project ID: 390685689). The research of N. Tupitsa in part V was supported by the Ministry of Science and Higher Education of the Russian Federation (Goszadaniye) No. 075-00337-20-03, project No. 0714-2020-0005.}
	\thanks{N.T. is with the Moscow Institute of Physics and Technology, Institute for Information Transmission Problems and National Research University Higher School of Economics, Russia (\textit{tupitsa @phystech.edu}).
	P.D. is with the Weierstrass Institute for Applied Analysis and Stochastics, Germany, and the Institute for Information Transmission Problems, Russia 	(\textit{pavel.dvurechensky @wias-berlin.de}). 
	A.G.  is with the Moscow Institute of Physics and Technology, Institute for Information Transmission Problems, Russia, and National Research University Higher School of Economics, Russia, and
	Sirius University of Science and Technology, Russia, and
    Caucasus Mathematical Center, Adyghe State University, Russia.
	(\textit{gasnikov@yandex.ru}).
	C.A.U. is with the Laboratory for Information and Decision Systems (LIDS), Massachusetts Institute of Technology, USA (\textit{cauribe@mit.edu}). }%
}



\begin{document}

\maketitle

\begin{abstract}
We study multimarginal optimal transport (MOT) problems, which include, as a particular case, the Wasserstein barycenter problem. In MOT problems, one has to find an optimal coupling between $m$ probability measures, which amounts to finding a tensor of order $m$. We propose a method based on accelerated alternating minimization and estimate the complexity to find an approximate solution. We use entropic regularization with a sufficiently small regularization parameter and apply accelerated alternating minimization to the dual problem. A novel primal-dual analysis is used to reconstruct the approximately optimal coupling tensor. Our algorithm exhibits a better computational complexity than the state-of-the-art methods for some regimes of the problem parameters.

\end{abstract}

\section{Introduction}\label{sec:intro}

Optimal transport (OT) has gained increasing interest in recent years from its broad range of applications ranging from medical image processing~\cite{gramfort2015fast}, machine learning~\cite{arjovsky2017wasserstein}, graph-theory~\cite{asoodeh2018curvature},  control theory~\cite{chen2016optimal}, among many others. Fundamentally, many of these applications require the comparison and quantification of distances between probability distributions~\cite{elvander2019multi}. In Kantorovich formulation, the OT problem seeks to minimize
\begin{align*}
    \int_{M_1 \times \cdots \times M_m} c(x_1,\cdots,x_m)d\pi(x_1,\cdots,x_m),
\end{align*}
over the set $\Pi(p_1. \cdots, p_m)$ of positive joint measures $\pi$ on the product space $M_1 \times \cdots M_m$ whose marginals are the $p_k$'s, where  $p_1. \cdots, p_m$ (marginals) is a set of probability measures on smooth manifolds $M_1,\dots,M_m$, and $c(x_1,\cdots,x_m)$ is a cost function~\cite{pass2015multi}.

Although the OT problem formulation is mathematically precise, see, for example, the seminal monograph by Villani~\cite{Villani-2003-Topic}, and references therein, its translation to practical applications heavily depends on the availability of computationally attractive methods. Many of the OT related problems are computationally intense, and much effort has been put into analyzing the underlying complexity of such problems~\cite{dvurechensky2018computational,kroshnin2019complexity,lin2019efficient,jambulapati2019direct}.

\vspace{-0.7cm}
\begin{figure}[ht]
	\centering
	\begin{overpic}[width=0.4\textwidth]{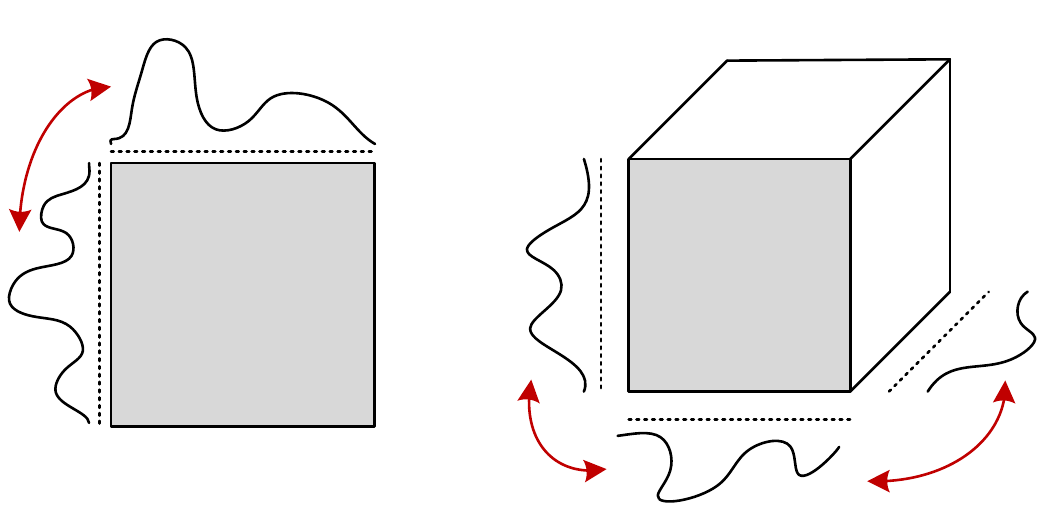}
		\put(-3,17){{\footnotesize $p_1$}}
		\put(20,45){{\footnotesize $p_2$}}
		\put(15,23){{\footnotesize Transport}}
		\put(20,18){{\footnotesize Plan}}
		\put(17,12){{\footnotesize $m=2$}}
		\put(63,23){{\footnotesize Transport}}
		\put(66,18){{\footnotesize Plan}}
		\put(64,13){{\footnotesize $m=3$}}
		\put(45,17){{\footnotesize $p_1$}}
		\put(68,-1){{\footnotesize $p_2$}}
		\put(101,18){{\footnotesize $p_3$}}
	\end{overpic} 
	\caption{A visual representation of the multimarginal optimal transport problem for $m=2$ and $m=3$. When $m=2$, the transport plan defines the optimal cost of moving $p_1$ into $p_2$. For discrete distributions this corresponds to a matrix with marginals $p_1$ and $p_2$. When $m=3$, in the discrete case is, the transport plan is a three dimensional tensor, whose marginals are $p_1$, $p_2$, and $p_3$.}
	\label{fig:transport}
\end{figure}

\vspace{-0.4cm}
Classically, OT has been studied for quantifying distances between \textit{two} probability distributions (i.e., $m=2$) for which theory is fairly well understood~\cite{Villani-2003-Topic,ambrosio2013user,mccann2012glimpse}. However, for $m\geq 3$, i.e., the multimarginal optimal transport (MOT) problem, much less is known, even though such regime has been recently shown useful for many applications, like tomographic image reconstruction~\cite{abraham2017tomographic}, generative adversarial networks~\cite{cao2019multi}, economics~\cite{ekeland2005optimal}, and density functional theory~\cite{parr1980density}. Figure~\ref{fig:transport} shows a visual representation of the MOT problem for $m=3$. See~\cite{pass2015multi} for a recent survey of fundamental theoretical formulations and applications of the MOT problem. 

Computational aspects of the MOT problem were studied in \cite{benamou2015iterative}, where an Iterative Bregman Projections algorithm was proposed for this problem, yet without complexity analysis. It was also pointed out that the MOT problem can be applied to calculate the barycenter of $m$ measures without fixing the barycenter's support. In \cite{2019arXiv191000152L}, the authors propose and analyze the complexity of two algorithms for the MOT problem. We follow~\cite{2019arXiv191000152L} by using the entropy regularization approach as well~\cite{cuturi2013sinkhorn}.

In this paper, we develop an algorithm for the computation of approximate solutions for the MOT problem using recently developed methods of alternating minimization. Our contributions are three-fold:
\begin{itemize}
    \item We develop a novel algorithm for the approximate computation of MOT maps based on accelerated alternating minimization algorithm.
    \item We formally prove the computational complexity of the proposed algorithm. We show that the proposed algorithm has an iteration complexity {$\widetilde{O} \left({m^2n^{1/2}}/{\varepsilon} \right)$}, and a computational complexity of {$\widetilde{O} \left({m^3n^{m+1/2}}/{\varepsilon} \right)$} arithmetic operations. Our result indicates an upper exponential bound for the Wasserstein barycenter problem's complexity with free support, which is known to be a non-convex optimization problem. 
    \item We show that in some regimes of the MOT problem parameters $m$ (number of distributions), $n$ (dimension of the distributions), and $\varepsilon$ (desired accuracy), the proposed algorithm has better iteration complexity in comparison with existing methods.
\end{itemize}

This paper is organized as follows. Section~\ref{sec:problem} presents the problem formulation and the dual aspects of the OT problem.  Section~\ref{sec:algo_design} contains the algorithm design methodology and the theoretical primal-dual analysis required for the establishment of the algorithmic complexity. Section~\ref{sec:experiments} shows some preliminary experiments. Section~\ref{sec:complexity} discusses the specific computational complexity results. Finally. {Section~\ref{sec:conclusions} presents the conclusions and future work.}

\section{The Entropy Regularized MOT Problem}\label{sec:problem}

In what follows, $\Delta^n$ denotes the probability simplex in $\R^n_+$:  $\Delta^n = \{u \in \R^n_+: \one_n^\top u = 1\}$. For a tensor $A = (A_{i_1, \ldots, i_m}) \in \R^{n_1 \times \ldots \times n_m}$, we write $\|A\|_\infty = \max_{1 \leq i_k \leq n_j, \forall k \in \{1, \dots, m\}} |A_{i_1, \ldots, i_m}|$ and $\|A\|_1 = \sum_{1 \leq i_k \leq n_j, \forall k \in \{1, \dots, m\}} |A_{i_1, \ldots, i_m}|$, and denote by $p_k(A) \in \R^{n_k}$ its $k$-th marginal for $k \in \{1, \dots, m\}$ where each component is defined as 
\begin{equation*}
[p_k(A)]_j \ = \  \sum_{1 \leq i_l \leq n_l, \forall l \neq k} A_{i_1, \ldots, i_{k-1}, j, i_{k+1}, \ldots, i_m}.
\end{equation*}
For two tensors of the same dimension, we denote the Frobenius inner product of $A$ and $B$ by 
\begin{equation*}
\left\langle A, B\right\rangle \ = \  \sum_{1 \leq i_k \leq n_k, \forall k \in \{1, \dots, m\}} A_{i_1, \ldots, i_m} B_{i_1, \ldots, i_m}.
\end{equation*}
 
The MOT problem between $m \geq 2$ discrete probability distributions with $n$ support points\footnote{For simplicity we consider same cardinality of the support set for each distribution. This can be extended for general case.} has the following form:
\begin{eqnarray}\label{prob:multi_OT}
    \min_{\substack{X \in \R_+^{n \times \ldots \times n}, \;\; 
          p_k(X) = p_k, \ \forall k \in \{1, \dots, m\}}}
        \left\langle C, X\right\rangle, 
\end{eqnarray}
where $X$ denotes a multimarginal transportation plan and $C \in \R_+^{n \times \ldots \times n}$ is a cost tensor. For all $k \in \{1, \dots, m\}$, a vector $p_k = (p_{kj})$ is given as a probability vector in $\Delta^n$.

The MOT problem is a linear program with $mn$ equality constraints, and $n^m$ variables and inequality constraints. When $m = 2$, the MOT problem reduces to the classical OT problem~\cite{Villani-2003-Topic}. 

In the general case of $m$ measures, one of the applications of MOT is grid-free Wasserstein barycenter computation \cite{benamou2015iterative}. Despite the linear programming (LP) formulation being in its standard form, the problem's dimension, which is exponential in $m$, does not allow the use of standard LP solvers such as interior-point methods \cite{nesterov1994interior,Lee-2014-Path}. Next, we describe how to apply the entropic regularization approach so ameliorate such computational requirements.

Following \cite{cuturi2013sinkhorn,benamou2015iterative}
, we consider a regularized version of~\eqref{prob:multi_OT}, in which we add an entropic penalty to the multimarginal transportation plan. The resulting problem has the following form:
\begin{equation}
\label{eq:EMOT_Pr_St}
    \min_{
        \substack{X \in \R_+^{n \times \ldots \times n},
        \\
        \;p_k(X) = p_k, \quad \forall k \in \{1, \dots, m\} 
                    \\ \sum_{i_1, \dots, i_m} X_{i_1,\dots, i_m} = 1,\quad 1 \leq i_j \leq n 
                 }}  F(X):=\left\langle C, X\right\rangle - \gamma H(X), 
\end{equation}
where $\gamma > 0$ is the regularization parameter, and $H(X)$ is the entropic regularization term:
$
H(X) \ : = \ - \left\langle X, \log(X)\right\rangle.
$
Here logarithm of a tensor should be understood as component-wise. We underline that we add a constraint that $X$ belongs to probability simplex of the size $n^m$. This constraint is a corollary of the fact that all the vectors $p_k$, $k=1,...,m$ belong to $\Delta^n$. Adding this constraint does not change the problem's solution, but it is crucial to obtain a dual optimization problem to have a Lipschitz-continuous gradient. The reason for the latter is that entropy is strongly convex on the probability simplex w.r.t. the $1$-norm.

The next lemma shows that the entropy regularized MOT problem has a closed-form dual representation that we can exploit for developing computationally efficient approaches.

\begin{lemma}
The dual problem formulation of the entropy regularized MOT problem~\eqref{eq:EMOT_Pr_St} is defined as $\max_\Lambda \phi(\Lambda)$, where 
\begin{multline}
\label{eq:dual_obj_lambda}
    \phi(\Lambda) :=
    \\
     -\gamma\Bigg[
    \ln\sum_{\substack{i_1, \dots, i_m \\ 1 \leq i_j \leq n \\  1 \leq j \leq m }} \exp\left\{-\sum_{k=1}^m \frac{[\lambda_k]_{i_k}}{\gamma} - \frac{C_{i_1 \ldots i_m }}{\gamma}-1  \right\}
    \\
    + 1 + \frac{1}{\gamma} \sum_{k = 1}^m \lambda_{k}^Tp_k \Bigg].
\end{multline}
Moreover, the primal variable can computed as
\begin{multline}
    \label{primal-var}
    X_{i_1 \ldots i_m}(\Lambda) 
    = \frac{
        \exp{\left(-\sum\limits_{k=1}^m \frac{{[\lambda_k]_{i_k}}}{\gamma} - \frac{C_{i_1 \ldots i_m }}{\gamma} \right)}}
        {\sum\limits_{\substack{i_1, \dots, i_m \\ 1 \leq i_j \leq n \\  1 \leq j \leq m }} \exp\left\{-\sum\limits_{k=1}^m \frac{{[\lambda_k]_{i_k}}}{\gamma} - \frac{C_{i_1 \ldots i_m }}{\gamma}  \right\}}
\end{multline}
Finally, with the change of variable $u_k=-\frac{\lambda_k}{\gamma} - \frac{\one}{m}$ the dual problem becomes
\begin{equation}
    \label{mot-dual}
    \min_U \ \ \phi(U) \equiv \phi(u_1,\ldots,u_m). 
\end{equation}
where $U = (u_1^T, \ldots, u_m^T)^T \in \R^{mn}$.

\end{lemma}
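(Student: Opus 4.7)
The plan is standard Lagrangian duality applied to~\eqref{eq:EMOT_Pr_St}, followed by an algebraic rearrangement. Since $X\ge 0$ is implicit in the domain of entropy, I attach multipliers $\lambda_k\in\R^n$ to the marginal constraints $p_k(X)=p_k$ and a scalar $\mu$ to $\|X\|_1=1$, forming
$$L(X,\Lambda,\mu)=\langle C,X\rangle+\gamma\langle X,\log X\rangle+\sum_{k=1}^m\lambda_k^\top(p_k(X)-p_k)+\mu(\|X\|_1-1).$$
The first-order condition $\partial L/\partial X_{i_1\dots i_m}=0$ gives an entry-wise exponential form $X^*_{i_1\dots i_m}=\exp\bigl(-C_{i_1\dots i_m}/\gamma-\sum_k[\lambda_k]_{i_k}/\gamma-1-\mu/\gamma\bigr)$, and using the FOC to simplify the bracket that multiplies $X^*$ after regrouping shows $\min_X L=-\gamma\|X^*\|_1-\sum_k\lambda_k^\top p_k-\mu$.

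Next I would maximize over $\mu$. Writing $\|X^*\|_1=e^{-\mu/\gamma}S(\Lambda)$ where $S(\Lambda)$ is the partition function that appears inside the logarithm in~\eqref{eq:dual_obj_lambda}, the optimum is $e^{-\mu^*/\gamma}=1/S(\Lambda)$, i.e.\ $\mu^*=\gamma\log S(\Lambda)$. Plugging $\mu^*$ back does two things simultaneously: it normalizes $X^*$ into the softmax form~\eqref{primal-var}, and it turns the dual value into $-\gamma\log S(\Lambda)-\gamma-\sum_k\lambda_k^\top p_k$, which is exactly~\eqref{eq:dual_obj_lambda} once the prefactor $-\gamma$ is factored out.

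The change of variable $u_k=-\lambda_k/\gamma-\one/m$ is then a direct substitution. The identity $\sum_k[u_k]_{i_k}=-\sum_k[\lambda_k]_{i_k}/\gamma-1$ collapses the exponent inside the log-sum-exp to $\sum_k[u_k]_{i_k}-C_{i_1\dots i_m}/\gamma$, while $\one^\top p_k=1$ transforms $\sum_k\lambda_k^\top p_k$ into $-\gamma\sum_k u_k^\top p_k-\gamma$. A sign flip finally converts $\max_\Lambda$ into $\min_U$, yielding~\eqref{mot-dual}. The only real obstacle is careful book-keeping of the additive $-1$ inside the exponential, the compensating $+1$ outside the logarithm, and the sign convention that turns the dual maximization into a minimization; everything else is the routine Fenchel conjugate of negative entropy on the simplex.
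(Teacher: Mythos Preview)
Your proposal is correct and follows essentially the same route as the paper: form the Lagrangian with multipliers $\lambda_k$ for the marginals and $\mu$ for the simplex constraint, solve the separable minimization in $X$ via the first-order condition to obtain the exponential form, then optimize over $\mu$ to recover the log-partition structure and the softmax primal reconstruction, and finally substitute $u_k=-\lambda_k/\gamma-\one/m$. Your write-up is in fact slightly more explicit than the paper's about the bookkeeping in the last step (the $-1$ in the exponent, the compensating $+\gamma$ term, and the sign flip from $\max_\Lambda$ to $\min_U$), but the argument is the same.
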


All the proofs of this paper can be found in~\cite{tupitsa2020multimarginal}.

\begin{proof}

We introduce dual variables $\lambda_i \in \R^n$ for $i \in \{1,\dots, m\}$ and define the Lagrangian function as follows:

\begin{multline}
    L(X, \Lambda, \mu) = \left\langle C, X\right\rangle + \gamma \left\langle X, \log(X)\right\rangle
    \\
    + \sum_{k=1}^m \lambda_k^T(p_k(X) - p_k) 
    + \mu \sum_{\substack{i_1, \dots, i_m \\ 1 \leq i_j \leq n \\  1 \leq j \leq m }} X_{i_1\ldots i_m} - \mu,
\end{multline}
where $\Lambda = (\lambda_1^T, \ldots, \lambda_m^T)^T \in \R^{mn}$,
and formulate the dual  unconstrained problem 
\[
    \max_{\Lambda \in \R^{mn}}\max_{\mu\in\R}  \min_{X\in\R_+^{n^m}} L(X, \Lambda, \mu) .
\]

Taking the derivative with respect to $X_{i_1 \ldots i_m}$ and setting it to zero yields
\begin{multline}
    \frac{\partial L}{\partial X_{i_1 \ldots i_m}}(X, \Lambda, \mu) = C_{i_1 \ldots i_m} +\gamma + \gamma \log(X_{i_1 \ldots i_m}) 
    \\
    + \sum_{k = 1}^m [\lambda_k]_{ i_k} + \mu = 0. 
\end{multline}
the solution of the above problem is
\[
    X_{i_1 \ldots i_m}(\Lambda,\mu) = \exp{
    \left(\frac{-\sum_{k=1}^m [\lambda_k]_{i_k} - C_{i_1 \ldots i_m} -\gamma - \mu }{\gamma}\right)}.
\]

Therefore, we have
\[
    L(\Lambda,\mu) = -\gamma \sum_{\substack{i_1, \dots, i_m \\ 1 \leq i_j \leq n \\  1 \leq j \leq m }} X_{i_1,\dots, i_m}(\Lambda, \mu) - \sum_{k=1}^m \lambda_k^T p_k - \mu.
\]

By taking a derivative w.r.t $\mu$  and setting it to zero we have
\[
    \sum_{\substack{i_1, \dots, i_m \\ 1 \leq i_j \leq n \\  1 \leq j \leq m }} X_{i_1,\dots, i_m}(\Lambda, \mu(\Lambda)) - 1 =0.
\]
From where we can express $\mu(\Lambda)$ as
\[
    \exp\left\{ -\frac{\mu}{\gamma}\right\}\sum_{\substack{i_1, \dots, i_m \\ 1 \leq i_j \leq n \\  1 \leq j \leq m }} \exp\left\{-\sum_{k=1}^m \frac{\lambda_{ki_k}}{\gamma} - \frac{C_{i_1 \ldots i_m }}{\gamma}-1  \right\} =1,
\]
yielding the theorem's statements.

As it is known \cite{nesterov2005smooth}, the objective in \eqref{eq:dual_obj_lambda} has Lipschitz continuous gradient. This follows from the fact that entropy is strongly convex on the probability simplex. Since the dual objective has Lipschitz gradient, we can use gradient-type of methods to solve the dual problem and obtain the corresponding complexity.

Finally, with the change of variable $u_k=-\frac{\lambda_k}{\gamma} - \frac{\one}{m}$ the dual objective becomes
\begin{multline}
    \phi(U) \equiv \phi(u_1,\ldots,u_m)
    \equiv 
    \\
    \gamma\Bigg[
    \ln\sum_{\substack{i_1, \dots, i_m \\ 1 \leq i_j \leq n \\  1 \leq j \leq m }} \exp\left\{\sum_{k=1}^m [u_k]_{i_k}  - \frac{C_{i_1 \ldots i_m }}{\gamma}  \right\}
    -\sum_{k = 1}^m u_{k}^Tp_k \Bigg],
\end{multline}
where $U = (u_1^T, \ldots, u_m^T)^T \in \R^{mn}$.

\end{proof}


\section{Algorithm Design Based on the Alternating Minimization Approach}\label{sec:algo_design}

In this section, we describe the proposed approach for designing an algorithm to approximately solve the MOT problem, based on an alternating minimization approach.

First, we introduce the tensor $B(U)\in \R_+^{n^m}$ with elements given as
\begin{multline*}
B_{i_1, \ldots, i_m}(u_1, \ldots, u_m)
    = \exp\left\{\sum_{k=1}^m [u_k]_{i_k}  - \frac{C_{i_1 \ldots i_m }}{\gamma}  \right\},
\end{multline*}
and element-wise sum given as
\begin{equation*}
    \Sigma(U) = \sum_{\substack{i_1, \dots, i_m \\ 1 \leq i_j \leq n,  1 \leq j \leq m }} B_{i_1, \ldots, i_m}(u_1, \ldots, u_m).
\end{equation*}
    
Moreover, it follows that the partial derivatives of the dual function $\phi $ are
\begin{multline}
    \label{part-deriv}
    \frac{1}{\gamma}\left[\frac{\partial \phi }{\partial u_\xi}\right]_\eta =  \sum_{\substack{i_1, \dots, i_m \\ 1 \leq i_j \leq n \\  1 \leq j \leq m \\i_\xi =\eta }} \frac{\exp\left\{\sum_{k=1}^m [u_k]_{i_k}  - \frac{C_{i_1 \ldots i_m }}{\gamma}  \right\}}{\Sigma(U)} -  [p_\xi]_{\eta} 
    \\
    =
    \frac{[p_\xi(B(U))]_\eta}{\Sigma(U)}- [p_\xi]_{\eta} .
\end{multline}

Therefore, as shown in the next lemma, we obtain a closed-form solution for alternating minimization of the dual problem.

\begin{lemma}
\label{Lm:block_min}
The iterations \[u_k^{t+1} \in \argmin_{u\in\mathbb{R}^n}\phi(u^t_1, \ldots, u^t_{k-1}, u, u^t_{k+1}, \ldots, u^t_m),\] can be written explicitly as
\[u^{t+1}_k=u^{t}_k+\ln p_k-\ln p_k(B(U^t)), \]
or entry-wise as
\begin{equation}
    [u^{t+1}_k]_\eta=[u^{t}_k]_\eta+\ln [p_k]_\eta-\ln [p_k(B(U^t))]_\eta \label{u-iter}.
\end{equation}
\end{lemma}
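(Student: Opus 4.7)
The plan is to verify that the proposed update satisfies the first-order optimality condition for minimizing $\phi$ in the $k$-th block. Since the closed-form expression for the partial derivative $\partial\phi/\partial u_k$ is already available from \eqref{part-deriv}, the task reduces to checking when this partial derivative vanishes under a shift in the $k$-th block only.

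First I would isolate the structural fact that drives the proof: when only $u_k$ changes from $u_k^t$ to $u_k^{t+1} = u_k^t + \delta$, each tensor entry $B_{i_1,\ldots,i_m}(U)$ is multiplied by the factor $\exp(\delta_{i_k})$ and nothing else, because $B$ depends on $u_k$ only through $[u_k]_{i_k}$. Summing over all indices except $i_k=\eta$, this yields the key identity $[p_k(B(U^{t+1}))]_\eta = \exp(\delta_\eta)\,[p_k(B(U^t))]_\eta$ for every $\eta$.

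Next I would substitute the candidate $\delta_\eta = \ln[p_k]_\eta - \ln[p_k(B(U^t))]_\eta$, which gives $\exp(\delta_\eta) = [p_k]_\eta / [p_k(B(U^t))]_\eta$, hence $[p_k(B(U^{t+1}))]_\eta = [p_k]_\eta$. Summing over $\eta$ and using that $p_k \in \Delta^n$ produces $\Sigma(U^{t+1}) = 1$. Plugging these two facts into the partial-derivative formula \eqref{part-deriv} gives
\begin{equation*}
\tfrac{1}{\gamma}\bigl[\partial\phi/\partial u_k\bigr]_\eta(U^{t+1}) \;=\; \frac{[p_k]_\eta}{1} - [p_k]_\eta \;=\; 0,
\end{equation*}
so the block-wise first-order condition is satisfied. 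Convexity of $\phi$ in $u_k$ (inherited from the log-sum-exp term) then certifies that this stationary point is a block-wise minimizer.

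The only conceptual subtlety, which I would address by a brief remark, is that the block-minimizer is not unique: $\phi$ is invariant under shifts $u_k \mapsto u_k + c\,\one$ since the log-sum-exp increases by $c$ and $u_k^{\top}p_k$ also increases by $c\cdot \one^{\top}p_k = c$. This is why the statement uses ``$\in \argmin$'' rather than equality, and the displayed formula simply picks one canonical representative of this one-parameter family. There is no genuine obstacle here; the computation is routine once one exploits the multiplicative structure of $B$ under a single-block shift.
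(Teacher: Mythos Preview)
Your proposal is correct and follows essentially the same route as the paper: both exploit the multiplicative structure of $B$ under a single-block shift to show $\Sigma(U^{t+1})=1$ and then verify the first-order condition \eqref{part-deriv} directly. Your version is slightly more complete in that you explicitly invoke convexity of $\phi$ in $u_k$ and address the non-uniqueness via the shift invariance $u_k\mapsto u_k+c\,\one$, points which the paper's proof leaves implicit.
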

\begin{proof}
Consider the following tensor
\begin{multline*}
    B_{i_1, \ldots, i_m}(u^t_1, \ldots, u^t_{\xi-1}, u^{t+1}_\xi, u^t_{\xi+1}, \ldots, u^t_m)
    \\
    = \exp\left\{[u^{t+1}_\xi]_{i_\xi} + \sum_{k\neq\xi} [u^t_k]_{i_k}  - \frac{C_{i_1 \ldots i_m }}{\gamma}  \right\}
    \\
    =
    \frac{\exp[u^{t+1}_\xi]_{i_\xi}}{\exp[u^{t}_\xi]_{i_\xi}} \exp\left\{[u^{t}_\xi]_{i_\xi} + \sum_{k\neq\xi} [u^t_k]_{i_k}  - \frac{C_{i_1 \ldots i_m }}{\gamma}  \right\}
    \\
    = \frac{\exp[u^{t+1}_\xi]_{i_\xi}}{\exp[u^{t}_\xi]_{i_\xi}} B(U^t),
\end{multline*}
and plug in the expression \eqref{u-iter} from the lemma statement
\begin{multline*}
    \sum_{\substack{i_1, \dots, i_m \\ 1 \leq i_j \leq n \\  1 \leq j \leq m  }} B_{i_1, \ldots, i_m}(u^t_1, \ldots, u^t_{\xi-1}, u^{t+1}_\xi, u^t_{\xi+1}, \ldots, u^t_m)
    \\
    = 
    \sum_\eta \sum_{\substack{i_1, \dots, i_m \\ 1 \leq i_j \leq n \\  1 \leq j \leq m \\i_\xi =\eta }} B_{i_1, \ldots, i_m}(u^t_1, \ldots, u^t_{\xi-1}, u^{t+1}_\xi, u^t_{\xi+1}, \ldots, u^t_m)
    \\
    = \sum_\eta \frac{\exp[u^{t+1}_\xi]_{\eta}}{\exp[u^{t}_\xi]_{\eta}} [p_\xi(B(U^t))]_\eta
    \\
    \stackrel{\eqref{u-iter}}{=}\sum_\eta \frac{[p_\xi]_\eta}{[p_\xi(B(U^t))]_\eta} [p_\xi(B(U^t))]_\eta = 1.
\end{multline*}

Next, we plug \eqref{u-iter} in the optimality conditions
$ \frac{\partial \phi }{\partial [u_\xi]_\eta} =0 $
and show that the conditions are satisfied
\begin{multline*}
    [p_\xi]_{\eta}  =
    \\
    = \frac{\exp([u^{t+1}_\xi]_{\eta})  \sum_{\substack{i_1, \dots, i_m \\ 1 \leq i_j \leq n \\  1 \leq j \leq m \\i_\xi =\eta }} \exp\left\{\sum_{k\neq\xi} [u^t_k]_{i_k}  - \frac{C_{i_1 \ldots i_m }}{\gamma}  \right\}}
    {\sum_{\substack{i_1, \dots, i_m \\ 1 \leq i_j \leq n \\  1 \leq j \leq m  }} B_{i_1, \ldots, i_m}(u^t_1, \ldots, u^t_{\xi-1}, u^{t+1}_\xi, u^t_{\xi+1}, \ldots, u^t_m)} 
    \\
    = \exp([u^{t+1}_\xi]_{\eta})  \sum_{\substack{i_1, \dots, i_m \\ 1 \leq i_j \leq n \\  1 \leq j \leq m \\i_\xi =\eta }} \exp\left\{\sum_{k\neq\xi} [u^t_k]_{i_k}  - \frac{C_{i_1 \ldots i_m }}{\gamma}  \right\}
    \\
    =
    \frac{e^{[u^{t+1}_\xi]_{\eta}}} {e^{[u^{t}_\xi]_{\eta}}} \sum_{\substack{\ldots\\\ldots\\i_\xi =\eta }} B(U^t) 
    \stackrel{\eqref{u-iter}}{=} \frac{[p_\xi]_\eta}{[p_\xi(B(U^t))]_\eta}[p_\xi(B(U^t))]_\eta.
\end{multline*}
\end{proof}
Lemma \ref{Lm:block_min} implies that the dual objective $\phi$ can be explicitly minimized in each of the $m$ blocks of variables $u_k$, $k=1,...,m$, suggesting to use alternating minimization algorithms for the dual problem. Note that the nature of the Iterative Bregman Projections algorithm \cite{benamou2015iterative} is different since it is an alternating projection algorithm for the primal problem.

\subsection{General Primal-Dual Accelerated Alternating Minimization}
\label{sec:primal-dual}

In order to analyze the proposed algorithm, first we develop a general framework for primal-dual accelerated alternating minimization. We consider a general minimization problem \[
(P_1) \quad \quad \min_{x\in Q \subseteq E} \left\{ f(x) : \mathcal{A}x =b \right\},\] where $E$ is a finite-dimensional real vector space, $Q$ is a simple closed convex set, $\mathcal{A}$ is a given linear operator from $E$ to some finite-dimensional real vector space $H$, $b \in H$ is given. This problem template, in particular, covers Problem~\eqref{eq:EMOT_Pr_St}.
The Lagrange dual problem to Problem $(P_1)$ is
\[
(D_1) \quad \quad \max_{\lambda \in \Lambda} \left\{ - \la \lambda, b \ra  + \min_{x\in Q} \left( f(x) + \la \mathcal{A}^T \lambda  ,x \ra \right) \right\}.
\notag
\]
Here, we define $\Lambda=H^*$. Note also that Problem~\eqref{eq:dual_obj_lambda} is a particular case of this general dual template.
It is convenient to rewrite Problem $(D_1)$ in the equivalent form of a minimization problem
\begin{align}
& (P_2) \quad \min_{\lambda \in \Lambda} \left\{   \vp(\lambda)=\la \lambda, b \ra  + \max_{x\in Q} \left( -f(x) - \la \mathcal{A}^T \lambda  ,x \ra \right) \right\}. \notag
\end{align}
Since $f$ is convex, $\vp(\lambda)$ is a convex function. Thus, by Danskin's theorem  (see e.g. \cite{nesterov2005smooth}), its subgradient is
\begin{equation}
\nabla \vp(\lambda) = b - \mathcal{A} x (\lambda),
\label{eq:nvp}
\end{equation}
where $x (\lambda)$ is some solution of the convex problem
\begin{equation}
    \max_{x\in Q} \left( -f(x) - \la \mathcal{A}^T \lambda  ,x \ra \right).
    \label{eq:inner}
\end{equation}In what follows, we assume that $\vp(\lambda)$ is $L$-smooth and that the dual problem $(D_1)$ has a solution $\lambda^*$ and there exist some $R>0$ such that $\|\lambda^{*}\|_{2} \leqslant R$. We underline that the quantity $R$ will be used only in the convergence analysis, but not in the algorithm itself. 

To describe our algorithm we also need the following notation. The set $\{1,\ldots, N\}$ of indices of the orthonormal basis vectors $\{e_i\}_{i=1}^N$ is divided into $m$ disjoint subsets (blocks) $I_k$, $k\in\{1,\ldots,m\}$. Let $S_k(x)=x+\spn\{e_i:\ i\in I_k\}$, i.e. the affine subspace containing $x$ and all the points differing from $x$ only over the block $k$. 

The idea of the Algorithm \ref{PDAAM-2} is to use greedy alternating minimization steps in the dual and combine them with momentum, as in Nesterov's accelerated methods. This allows us to obtain an accelerated convergence rate for the dual problem. Further, we add a step which updates the primal variable, which is our actual objective, since it corresponds to the multimarginal transportation tensor.

\begin{algorithm}[ht]
\caption{Primal-Dual Accelerated Alternating Minimization (PD-AAM)}
\label{PDAAM-2}
\begin{algorithmic}[1]
   \STATE $A_0=\alpha_0=0$, $\eta^0=\zeta^0=\theta^0=\textbf{0}_{mn}$
   \FOR{$t \geqslant 0$}
    \STATE Set $\beta_t = \argmin\limits_{\beta\in [0,1]} \vp\left(\eta^t + \beta (\zeta^t - \eta^t)\right)$
    \STATE Set $\theta^t = \eta^t + \beta (\zeta^t - \eta^t)$
    \STATE Choose $i_t=\argmax\limits_{i\in\{1,\ldots,n\}} \|\nabla_i \vp(\theta^t)\|_2^2$
    \STATE Set $\eta^{t+1}=\argmin\limits_{\eta\in S_{i_t}(\theta^t)} \vp(\eta)$

    \STATE Find largest $a_{t+1}$ from the quadratic equation \[\vp(\theta^t)-\frac{a_{t+1}^2}{2(A_t+a_{t+1})}\|\nabla \vp(\theta^t)\|_2^2=\varphi(\eta^{t+1})\]\\
    \STATE  Set $A_{t+1} = A_{t} + a_{t+1}$
    \STATE Set $\zeta^{t+1} = \zeta^{t} - a_{t+1}\nabla \vp(\theta^t)$
    \STATE Set $\hat{x}^{t+1} = \frac{a_{t+1}x(\theta^{t})+A_t\hat{x}^{t}}{A_{t+1}},$ where $x(\theta^{t})$ is  the primal variable reconstruction (Eq.~\eqref{primal-var} in the case of MOT)
    \ENDFOR
    \ENSURE The points $\hat{x}^{t+1}$, $\eta^{t+1}$.
\end{algorithmic}
\end{algorithm}

The key result for this method is that it guarantees convergence in terms of the constraints and the duality gap for the primal problem, provided that the dual is smooth, in the spirit of \cite{dvurechensky2016primal-dual,dvurechensky2018decentralize,uribe2018distributed,guminov2019accelerated,dvurechensky2020stable,nesterov2020primal-dual}.

\begin{theorem}[\cite{2019arXiv190603622G}, Theorem 3]
\label{Th:PD_rate}
Let the objective $\vp$ in the problem $(P_2)$ be $L$-smooth and the solution of this problem be bounded, i.e. $\|\lambda^*\|_2 \leqslant R$. Then, for the sequences $\hat{x}_{t+1},\eta_{t+1}$, $t\geqslant 0$, generated by Algorithm \ref{PDAAM-2}, we have
\begin{align}
    f(\hat{x}^t) - f^* \leq f(\hat{x}^t) + \vp(\eta^t) 
    &\leqslant \frac{2mLR^2}{t^2}, \label{eq:gen_obj_bound}
    \\
    \|\mathcal{A} \hat{x}^t - b \|_2 &\leqslant \frac{8mLR}{t^2}. \label{eq:gen_constr_bound}
\end{align}
\end{theorem}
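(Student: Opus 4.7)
The plan is to combine an accelerated dual convergence rate for $\vp$ with a primal-dual coupling argument that exploits the Danskin-type identity $\nabla\vp(\lambda) = b - \mathcal{A}x(\lambda)$ from \eqref{eq:nvp}. For the dual rate, the key observation is that the exact block minimization in step 6 combined with the greedy block selection in step 5 yields a per-step descent of order $\frac{1}{mL}\|\nabla\vp(\theta^t)\|_2^2$: the $L$-smoothness of $\vp$ gives $\vp(\eta^{t+1}) \leq \vp(\theta^t) - \frac{1}{2L}\|\nabla_{i_t}\vp(\theta^t)\|_2^2$, and the greedy rule ensures $\|\nabla_{i_t}\vp(\theta^t)\|_2^2 \geq \frac{1}{m}\|\nabla\vp(\theta^t)\|_2^2$. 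Plugging this lower bound into the defining quadratic equation for $a_{t+1}$ in step 7 yields $a_{t+1}^2 \geq A_{t+1}/(mL)$, and hence the standard Nesterov-type induction gives $A_t \geq t^2/(4mL)$.

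Next, I would prove by induction the estimating-sequence invariant $A_t \vp(\eta^t) \leq \psi_t^*$ with
\[
\psi_t(\lambda) := \tfrac{1}{2}\|\lambda\|_2^2 + \sum_{s=0}^{t-1} a_{s+1}\bigl[\vp(\theta^s) + \langle\nabla\vp(\theta^s), \lambda - \theta^s\rangle\bigr],
\]
whose unique minimizer is tracked by $\zeta^t$. Expanding $\psi_{t+1}^*$ around $\zeta^t$ (using the update $\zeta^{t+1} = \zeta^t - a_{t+1}\nabla\vp(\theta^t)$) and lower-bounding $\psi_t^*$ by $A_t[\vp(\theta^t) + \langle\nabla\vp(\theta^t), \eta^t - \theta^t\rangle]$ via convexity produces a residual cross-term proportional to $\langle\nabla\vp(\theta^t), \zeta^t - \eta^t\rangle$. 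This vanishes exactly because $\theta^t$ minimizes $\vp$ along the segment $[\eta^t, \zeta^t]$ (step 3), and the remaining terms collapse to $A_{t+1}\vp(\theta^t) - \frac{a_{t+1}^2}{2}\|\nabla\vp(\theta^t)\|_2^2$, which equals $A_{t+1}\vp(\eta^{t+1})$ by the very quadratic equation that defines $a_{t+1}$, closing the induction. This induction is the technical heart of the proof: the line search on $\beta_t$, the greedy block choice, and the implicit step-size $a_{t+1}$ each have to fit together for the recursion to close at the accelerated rate.

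Finally, for the primal-dual coupling I substitute $\nabla\vp(\theta^s) = b - \mathcal{A}x(\theta^s)$ and $\vp(\theta^s) = \langle \theta^s, b - \mathcal{A}x(\theta^s)\rangle - f(x(\theta^s))$ to rewrite each linearization inside $\psi_t$ as $-f(x(\theta^s)) + \langle b - \mathcal{A}x(\theta^s), \lambda\rangle$. Jensen's inequality applied to $f$ and the weighted average $\hat{x}^t = \frac{1}{A_t}\sum_{s=0}^{t-1} a_{s+1}x(\theta^s)$ converts the estimating-sequence bound into
\[
A_t\bigl[\vp(\eta^t) + f(\hat{x}^t)\bigr] \leq \tfrac{1}{2}\|\lambda\|_2^2 + A_t\langle b - \mathcal{A}\hat{x}^t, \lambda\rangle \quad \forall \lambda.
\]
Choosing $\lambda = -2R(b - \mathcal{A}\hat{x}^t)/\|b - \mathcal{A}\hat{x}^t\|_2$ and combining with the weak-duality lower bound $\vp(\eta^t) + f(\hat{x}^t) \geq -R\|b - \mathcal{A}\hat{x}^t\|_2$, which follows from $\vp(\eta^t) \geq \vp^*$ and $\vp^* + f(\hat{x}^t) \geq \langle \lambda^*, b - \mathcal{A}\hat{x}^t\rangle$, simultaneously yields $\|\mathcal{A}\hat{x}^t - b\|_2 = O(R/A_t)$ and $\vp(\eta^t) + f(\hat{x}^t) = O(R^2/A_t)$. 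Substituting $A_t \geq t^2/(4mL)$ produces the two claimed rates up to constants, and the trivial chain $f(\hat{x}^t) - f^* \leq f(\hat{x}^t) + \vp(\eta^t)$ follows from $\vp(\eta^t) \geq \vp^* = -f^*$.
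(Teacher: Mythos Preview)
The paper does not provide its own proof of Theorem~\ref{Th:PD_rate}; it quotes the result from \cite{2019arXiv190603622G} and immediately proceeds to estimate $L$ and $R$ for the MOT application. So there is no in-paper proof to compare against.

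Your proposal is a correct and complete outline of how this result is proved in the primal-dual accelerated alternating minimization literature, and it is essentially the argument given in the cited reference. The three blocks are right: (i) the greedy block step plus $L$-smoothness gives $\vp(\theta^t)-\vp(\eta^{t+1})\geq\frac{1}{2mL}\|\nabla\vp(\theta^t)\|_2^2$, so the implicit step-size equation yields $a_{t+1}^2\geq A_{t+1}/(mL)$ and hence $A_t\geq t^2/(4mL)$; (ii) the estimating-sequence induction $A_t\vp(\eta^t)\leq\psi_t^*$ closes exactly because the line search on $\beta$ kills the residual inner product along $\zeta^t-\eta^t$ (at boundary $\beta_t\in\{0,1\}$ the term is merely non-negative rather than zero, but that is the right sign for the inequality); (iii) the Danskin identity rewrites each linearization as $-f(x(\theta^s))+\langle b-\mathcal{A}x(\theta^s),\lambda\rangle$, and Jensen plus the two specific choices of $\lambda$ recover both \eqref{eq:gen_obj_bound} and \eqref{eq:gen_constr_bound} with the stated constants. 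In fact your argument, taking $\lambda=0$, even gives $f(\hat{x}^t)+\vp(\eta^t)\leq 0$, which is stronger than the bound $2mLR^2/t^2$ recorded in the statement; this is consistent, since $\hat{x}^t$ is generally infeasible and the ``duality gap'' can be non-positive.
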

To apply this result we need to estimate the Lipschitz constant $L$ of the gradient of the dual objective and provide a bound $R$ for an optimal solution. 

Later, we will see the application of Theorem~\ref{Th:PD_rate} to the MOT problem based on the following change of variables.
\[x \rightleftharpoons X, \quad f(x) \rightleftharpoons F(X), \quad \varphi(\Lambda) \rightleftharpoons \phi(U) \rightleftharpoons \phi(\Lambda)\]
\[
    \{x:\mathcal{A}x=b\} \rightleftharpoons  \{X: p_k(X) = p_k, \quad \forall k \in \{1, \dots, m\}  \}
\]
\[
    Q \rightleftharpoons \{X \in \R_+^{n \times \ldots \times n} :\sum_{i_1, \dots, i_m} X_{i_1,\dots, i_m} = 1,\quad 1 \leq i_j \leq n\}
\]
The primal variable $X$ is reconstructed from the dual variable $U$ or $\Lambda$ using~\eqref{primal-var}.

\subsection{Bound for L}
\label{S:L_Bound}
We endow the space of transportation tensors with $1$-norm, which leads to 
the primal objective in \eqref{eq:EMOT_Pr_St} being strongly convex on the feasible set of this problem with parameter $\gamma$. Further, we use the $2$-norm for the dual space of Lagrange multipliers $\Lambda$ in \eqref{eq:dual_obj_lambda}. Hence, the dual objective in \eqref{eq:dual_obj_lambda} is $L$-smooth 
with the parameter $L \leq \|\mathcal{A}\|^2_{1 \rightarrow 2}/\gamma$ ~\cite{nesterov2005smooth}
. Here $\mathcal{A}:\R^{n^m}\rightarrow\R^{mn}$ is the linear operator defining the linear constraints of the problem, which, in the case of the multimarginal optimal transport problem, is defined by 
$\mathcal{A}\vectr{(X)} = (p_1(X)^T,\ldots, p_m(X)^T)^T$.
Thus, each column of the matrix $\mathcal{A}$ contains no more than $m$ non-zero elements, which are equal to one. Hence, since $\|\mathcal{A}\|_{1 \rightarrow 2}$ is equal to maximum $2$-norm of the column of this matrix, we have that $\|\mathcal{A}\|_{1 \rightarrow 2} = \sqrt{m}$. Finally, we have that $L \leq \frac{m}{\gamma}$.

\subsection{Bound for R}
We return to the particular dual problem \eqref{mot-dual} for the MOT problem to estimate the norm of an optimal dual solution in this particular case.
\begin{lemma}
For every $u^*_\xi$ entry of $U^* = ([u^*_1]^T, \dots, [u^*_m]^T)^T$ the following holds
\[
    \max_\eta [u_\xi^*]_\eta - \min_\eta [u_\xi^*]_\eta \leq -\ln \nu \min_\eta [p_\xi]_{\eta}. 
\]
\end{lemma}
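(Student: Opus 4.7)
The starting point is the first-order optimality condition $\nabla\phi(U^*)=0$. From the expression \eqref{part-deriv} for the partial derivative, this is equivalent to the marginal-matching identity
\[
   [p_\xi(B(U^*))]_\eta \;=\; \Sigma(U^*)\,[p_\xi]_\eta
   \qquad\text{for all }\eta,\ \xi .
\]
My first step is to factor $\exp([u^*_\xi]_\eta)$ out of the sum $[p_\xi(B(U^*))]_\eta$ (since the index $i_\xi=\eta$ is fixed in that marginal), obtaining
\[
   \exp\bigl([u^*_\xi]_\eta\bigr)\,S_\eta \;=\; \Sigma(U^*)\,[p_\xi]_\eta ,
   \qquad
   S_\eta := \!\!\sum_{\substack{\{i_k\}_{k\neq\xi}}}\!\!
     \exp\!\Bigl\{\sum_{k\neq\xi}[u^*_k]_{i_k}-\tfrac{C_{i_1\ldots\eta\ldots i_m}}{\gamma}\Bigr\}.
\]

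Next I pick the indices $\eta_1=\arg\max_\eta[u^*_\xi]_\eta$ and $\eta_2=\arg\min_\eta[u^*_\xi]_\eta$ and take the ratio of the two identities. The unknown normalizer $\Sigma(U^*)$ cancels, yielding
\[
   \exp\!\bigl([u^*_\xi]_{\eta_1}-[u^*_\xi]_{\eta_2}\bigr)
   \;=\; \frac{[p_\xi]_{\eta_1}}{[p_\xi]_{\eta_2}}\cdot\frac{S_{\eta_2}}{S_{\eta_1}} .
\]
Because $[p_\xi]_{\eta_1}\leq 1$, the only remaining task is to upper-bound $S_{\eta_2}/S_{\eta_1}$. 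I do this term-by-term: the two sums range over the same index set $\{i_k\}_{k\neq\xi}$, and the only difference between a summand of $S_{\eta_1}$ and the corresponding one of $S_{\eta_2}$ is the cost entry $C_{i_1\ldots\eta_1\ldots i_m}$ versus $C_{i_1\ldots\eta_2\ldots i_m}$. Since $0\leq C\leq\|C\|_\infty$, the ratio of the two exponential cost factors lies in $[\,\exp(-\|C\|_\infty/\gamma),\,\exp(\|C\|_\infty/\gamma)\,]$, and consequently
\[
   \frac{S_{\eta_2}}{S_{\eta_1}} \;\leq\; \exp\!\bigl(\|C\|_\infty/\gamma\bigr).
\]

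Combining the last two displays and using $[p_\xi]_{\eta_2}\geq\min_\eta[p_\xi]_\eta$ gives
\[
   \exp\!\bigl([u^*_\xi]_{\eta_1}-[u^*_\xi]_{\eta_2}\bigr)
   \;\leq\; \frac{\exp(\|C\|_\infty/\gamma)}{\min_\eta[p_\xi]_\eta}
   \;=\; \frac{1}{\nu\,\min_\eta[p_\xi]_\eta},
\]
with $\nu:=\exp(-\|C\|_\infty/\gamma)$. Taking logarithms gives the desired inequality.

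\textbf{Main obstacle.} The routine algebra (using $\nabla\phi(U^*)=0$, factoring the exponential, taking a log) is mechanical. The one place that requires care is the termwise comparison of $S_{\eta_1}$ and $S_{\eta_2}$: one has to notice that they are indexed by the same $(m-1)$-tuples and that the only varying factor is the cost contribution, which then cleanly gives the bound through $0\le C\le \|C\|_\infty$. This also explains the precise form of the constant $\nu$ appearing in the statement.
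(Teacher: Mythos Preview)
Your proof is correct and follows essentially the same approach as the paper: both start from the first-order optimality condition \eqref{part-deriv}, factor out $\exp([u^*_\xi]_\eta)$, and exploit $0\le C\le\|C\|_\infty$ to control the cost contribution. The only cosmetic difference is that the paper derives separate upper and lower bounds on $[u^*_\xi]_\eta$ (bounding $e^{-C/\gamma}$ above by $1$ and below by $\nu$, after which the remaining sum is $\eta$-independent) and then subtracts, whereas you take the ratio of the two optimality identities directly and bound $S_{\eta_2}/S_{\eta_1}$ termwise; the resulting inequality is identical.
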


\begin{proof}
By the optimality condition \eqref{part-deriv}
\begin{multline*}
    0 = \frac{\partial \phi }{\partial [u_\xi]_\eta}
    = - [p_\xi]_{\eta} 
    \\
    + \frac{\exp([u_\xi]_{\eta})}{ \Sigma(U)} \sum_{\substack{i_1, \dots, i_m \\ 1 \leq i_j \leq n \\  1 \leq j \leq m \\i_\xi =\eta }} \exp\left\{\sum_{k\neq\xi} [u_k]_{i_k}  - \frac{C_{i_1 \ldots i_m }}{\gamma}  \right\},
\end{multline*}
where $\nu = \exp\frac{-\|C\|_\infty}{\gamma}$.
Since $p_\xi \in \Delta_n$, we obtain the bound for the the solution of the above optimality conditions

\begin{multline}
    1\geq [p_\xi]{_\eta} 
    \\
    = \frac{\exp([u^*_\xi]_\eta)}{ \Sigma(U^*)} \sum_{\substack{i_1, \dots, i_m \\ 1 \leq i_j \leq n \\  0 \leq j \leq m \\i_\xi =\eta }} \exp\left\{\sum_{k\neq\xi} [u^*_k]_{i_k}  - \frac{C_{i_1 \ldots i_m }}{\gamma}  \right\} 
    \\
    \geq \nu \exp([u^*_\xi]_\eta) \Sigma(U^*)^{-1} \sum_{\substack{i_1, \dots, i_m \\ 1 \leq i_j \leq n \\  0 \leq j \leq m \\i_\xi =\eta }} \exp\left\{\sum_{k\neq\xi} [u_k^*]_{i_k}   \right\}
    \\
    =
    \nu \exp([u^*_\xi]_\eta) \Sigma(U^*)^{-1} \sum_{\substack{{k=1}\\k\neq\xi}}^m \la \one, e^{u_k^*}\ra.
\end{multline}
From the above inequality we have
\begin{equation}
    [u_\xi^*]_\eta \leq \ln\Sigma(U^*)  - \ln \nu - \ln \sum_{\substack{{k=1}\\k\neq\xi}}^m \la \one, e^{u_k^*}\ra.
    \label{u-upper}
\end{equation}
On the other hand, 
\begin{multline}
    [p_\xi]{_\eta} = \frac{\exp([u_\xi^*]_\eta)}{ \Sigma(U^*)} \sum_{\substack{i_1, \dots, i_m \\ 1 \leq i_j \leq n \\  0 \leq j \leq m \\i_\xi =\eta }} \exp\left\{\sum_{k\neq\xi} [u_k^*]_{i_k}  - \frac{C_{i_1 \ldots i_m }}{\gamma}  \right\} 
    \\ 
    \leq \exp([u_\xi^*]_\eta) \Sigma(U^*)^{-1} \sum_{\substack{i_1, \dots, i_m \\ 1 \leq i_j \leq n \\  0 \leq j \leq m \\i_\xi =\eta }} \exp\left\{\sum_{k\neq\xi} [u_k^*]_{i_k}   \right\},
\end{multline}
leads to
\begin{equation}
    [u_\xi^*]_\eta \geq \ln [p_\xi]{_\eta} + \ln\Sigma(U^*)  - \ln \sum_{\substack{{k=1}\\k\neq\xi}}^m \la \one, e^{u_k^*}\ra.
    \label{u-lower}
\end{equation}
Combining \eqref{u-lower} and \eqref{u-upper} we have, for all $\xi = 1,...,m$,
\[
    \max_\eta [u_\xi^*]_\eta - \min_\eta [u_\xi^*]_\eta \leq -\ln \nu \min_\eta [p_\xi]_{\eta}. 
\]
\end{proof}

\begin{lemma}
Defining $\Lambda^0 = -\frac{\gamma}{m}\one_{mn}$, there exists a solution $\Lambda^*$ of the dual problem \eqref{eq:dual_obj_lambda} such that
\begin{align*}
    R=\|\Lambda^*-\Lambda^0\|_2 \leqslant\frac{\sqrt{mn}}{2}\left(\|C\|_\infty-\frac{\gamma}{2}\ln{\min\limits_{i,j}\{[p_i]_j\}}\right).
\end{align*}
\end{lemma}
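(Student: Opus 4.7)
The plan is to combine the previous lemma (which bounds the oscillation $\max_\eta[u_\xi^*]_\eta-\min_\eta[u_\xi^*]_\eta$ of each block of an optimal dual variable) with the translation invariance of $\phi$ to pick a particular minimizer that is well-centered, and then convert the resulting entry-wise bound into a Euclidean-norm bound.

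First I would observe that $\phi(\Lambda)$ in \eqref{eq:dual_obj_lambda} is invariant under shifts $\lambda_k\mapsto \lambda_k+c_k\one_n$ for arbitrary scalars $c_1,\dots,c_m$. Indeed, inside the exponential these shifts factor out as $\exp(-\sum_k c_k/\gamma)$, so the log-sum-exp term changes by $-\sum_k c_k/\gamma$; this is exactly cancelled by the increment $+\frac{1}{\gamma}\sum_k c_k$ of $\frac{1}{\gamma}\sum_k\lambda_k^\top p_k$ (using $\one^\top p_k=1$). This reflects the fact that the $m$ marginal equality constraints $p_k(X)=p_k$ each imply $\sum X=1$, leaving $m$ Lagrange degrees of freedom. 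Consequently, we are free to choose any particular optimizer $\Lambda^*$ (equivalently $U^*$) that is centered blockwise.

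Next I would translate to the $U$ variables via $u_k=-\lambda_k/\gamma-\one/m$. Then $\Lambda^0=-\frac{\gamma}{m}\one_{mn}$ corresponds to $U^0=0$ and $\Lambda^*-\Lambda^0=-\gamma U^*$, so it suffices to bound $\gamma\|U^*\|_2$. Using the shift invariance, I would pick the optimizer in which each block $u_k^*$ has been shifted so that its midpoint $\tfrac12(\max_\eta [u_k^*]_\eta+\min_\eta [u_k^*]_\eta)$ equals zero. After this recentering, every entry satisfies
\[
\bigl|[u_k^*]_\eta\bigr|\;\leq\;\tfrac12\bigl(\max_\eta[u_k^*]_\eta-\min_\eta[u_k^*]_\eta\bigr).
\]
By the previous lemma the right-hand side is at most $\tfrac12\bigl(\tfrac{\|C\|_\infty}{\gamma}-\ln\min_{i,j}[p_i]_j\bigr)$ uniformly in $k$.

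Finally, since $U^*\in\R^{mn}$ has $mn$ coordinates each controlled by this infinity-norm bound, I would assemble
\[
\|U^*\|_2\;\leq\;\sqrt{mn}\,\|U^*\|_\infty\;\leq\;\frac{\sqrt{mn}}{2}\Bigl(\frac{\|C\|_\infty}{\gamma}-\ln\min_{i,j}[p_i]_j\Bigr),
\]
and then multiply through by $\gamma$ to arrive at the stated bound $R=\|\Lambda^*-\Lambda^0\|_2$.

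The main obstacle I anticipate is a careful accounting of constants when translating between the $\Lambda$ and $U$ parametrizations and verifying that the chosen shift really is achievable as an optimum (i.e.\ that the dual invariance is precisely strong enough to normalize every block independently), as well as reconciling the $-\gamma$ versus $-\gamma/2$ factor on the $\ln\min[p_i]_j$ term: the midpoint-centering gives a clean bound with coefficient $\gamma$, and any improvement to $\gamma/2$ would need to come either from a sharper entry-wise comparison using both \eqref{u-upper} and \eqref{u-lower} separately (rather than subtracting them) or from an additional averaging in the choice of the blockwise shift.
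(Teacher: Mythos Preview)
Your proposal is correct and follows essentially the same route as the paper: both use the blockwise shift-invariance of $\phi$ to recenter $u_k^*$ so that $\|u_k^*\|_\infty=\tfrac12(\max-\min)$, invoke the previous lemma's oscillation bound, pass from the $\ell_\infty$ to the $\ell_2$ norm via the $\sqrt{mn}$ factor, and convert back to $\Lambda$ via $\Lambda^*-\Lambda^0=-\gamma U^*$.

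Your worry about the $-\gamma$ versus $-\gamma/2$ coefficient on $\ln\min_{i,j}[p_i]_j$ is well founded: the paper's own derivation produces
\[
-\tfrac{\sqrt{n}}{2}\ln\bigl(\nu\min_{i,j}[p_i]_j\bigr)=\tfrac{\sqrt{n}}{2}\Bigl(\tfrac{\|C\|_\infty}{\gamma}-\ln\min_{i,j}[p_i]_j\Bigr)
\]
and then asserts, without further argument, the inequality with $-\tfrac12\ln\min_{i,j}[p_i]_j$ in its place, which is the wrong direction since $-\ln\min_{i,j}[p_i]_j\ge 0$. So the $\gamma/2$ in the lemma statement appears to be a typo that the paper's proof does not actually establish; your midpoint-centering bound with coefficient $\gamma$ is what both arguments genuinely deliver, and this looser constant does not affect the downstream $\widetilde O$ complexity.
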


\begin{proof}
We begin by deriving an upper bound on $\|(u_1^{*T}, \ldots, u_m^{*T})^T\|_2$. Using the results of the previous lemma, it remains to notice that the objective $\phi(U)$ is invariant under transformations $u_i\to u_i+t_i\mathbf{1}$, 
$t_i\in\mathbb{R}$ for $i \in \{1, \ldots, m\}$, so there must exist some solution with
$\max_\eta [u^*_i]_\eta = - \min_\eta [u^*_i]_\eta = \|u_i^*\|_\infty$, so
\begin{align*}
    \|u_i^*\|_\infty&\leqslant-\frac{1}{2}\ln\nu\min_\eta [p_i]_\eta .
\end{align*}
As a consequence,
\begin{align*}
    \|u^*_i\|_2&\leqslant\sqrt{n}\|U^*\|_\infty\leqslant
    \\
    &\leqslant-\frac{\sqrt{n}}{2}\ln\nu\min_{i,j}\{[p_i]_{j}\}
    \\
    &\leqslant\frac{\sqrt{n}}{2}\left(\frac{\|C\|_\infty}{\gamma}-\frac{1}{2}\ln{\min\limits_{i,j}\{[p_i]_{j}\}}\right).
\end{align*}
and 
\begin{multline*}
    \|U^*\|_2 = \sqrt{ \sum_i^m \|u^*_i\|_2^2} \leq \frac{\sqrt{mn}}{2}\left(\frac{\|C\|_\infty}{\gamma}-\frac{1}{2}\ln{\min\limits_{i,j}\{[p_i]_{j}\}}\right)
\end{multline*}

By definition, $u_i=-\frac{1}{\gamma}\lambda_i-\frac{1}{m}\mathbf{1}$, so we have the inverse transformation $\lambda_i=-\gamma u_i -\frac{\gamma}{m}\mathbf{1}$. Finally, with $\Lambda^0 = -\frac{\gamma}{m}\one_{mn}$
\begin{multline*}
    R=\|\Lambda^* - \Lambda^0\|_2=
    \\
    =\Big\Vert(-\gamma u_1^* -\frac{\gamma}{m}\mathbf{1}, \dots, -\gamma u_m^* -\frac{\gamma}{m}\mathbf{1})
    \\
    -(-\frac{\gamma}{m}\mathbf{1}, \dots, -\frac{\gamma}{m}\mathbf{1})\Big\Vert_2 =\|-\gamma(u_1^*, \dots, u_m^*)\|_2
    \\
    =\gamma\|U^*\|_2 \leq \frac{\sqrt{mn}}{2}\left(\|C\|_\infty-\frac{\gamma}{2}\ln{\min\limits_{i,j}\{[p_i]_j\}}\right).
\end{multline*}

\end{proof}

\subsection{Projection on the feasible set}
The Algorithm  \ref{PDAAM-2} may return a point in the primal space which does not satisfy the equality constraints. In this subsection, we provide a procedure to project approximate transport tensor to obtain a feasible point for the primal problem, i.e.
find such $\widehat X \approx \hat X^t$ that $p_i(\widehat X) = p_i$. To do this we formulate Algorithm \ref{alg:round}, which is a generalization of rounding procedure in \cite{altschuler2017near-linear}, see also \cite{2019arXiv191000152L}.

\begin{algorithm}[ht]
\caption{Multimarginal Rounding}
\label{alg:round}
\begin{algorithmic}[1]
    \STATE $V_1 = U$
   \FOR{$r =1,\cdots,m-1$}
    \STATE $[{X}_r]_i  = \min \left\{[p_r ]_i/[p_r(V_r)]_i,1\right\}$
    \STATE $X_r = \text{DiagTensor}({x}_r)$
    \STATE $V_{r+1}  = \text{ProdTensor}_r(V_r ,X_r)$
    \ENDFOR
    \FOR{$r =1,\cdots,m$}
    \STATE $\text{err}_r = p_r - p_r(V_m)$
    \ENDFOR
    \ENSURE $\hat{V} = V_m  +  \bigotimes_{r=1}^m {\text{err}}_r / \| {\text{err}}_m \|_1^{m-1}$
\end{algorithmic}
\end{algorithm}

Note that in Algorithm~\ref{alg:round} the function $\text{DiagTensor}(\cdot)$ takes a vector as input and outputs a $m$-dimensional tensor with the input as its diagonal. Moreover,  $\text{ProdTensor}_r(A,B)$ takes two $m$-dimensional tensors as input, and multiplies them in the direction $r$. We use $\bigotimes$ to denote the tensor product of the input factors. The next lemma shows that the output of Algorithm~\ref{alg:round} is in the desired space with the corresponding marginals, and bounds the error induced by the projection.

\begin{lemma}
\label{Lm:Rounding}
Let $\{p_k\}_{k=1}^m \in \Delta_n$, and $U\in \mathbb{R}_+^{n\times \dots \times n}$, then Algorithm~\ref{alg:round} outputs a matrix $\hat{F}$ with marginals $\{p_k\}_{k=1}^m$, satisfying
$
    \|U - \hat{V}\|_1 \leq 2 \sum_{r=1}^m\|p_r - p_r(U ) \|_1.
$
\end{lemma}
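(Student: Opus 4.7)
The plan is to mirror the matrix rounding argument of \cite{altschuler2017near-linear} in the tensor setting. There are three things to verify: (i) the iterates $V_r$ are pointwise controlled and monotone; (ii) the rank-one correction produces exactly the right marginals; (iii) the total $L_1$ deviation is bounded by the sum of marginal errors of $U$. I would split the bound as
\[
\|U - \hat V\|_1 \leq \|U - V_m\|_1 + \|V_m - \hat V\|_1,
\]
and treat each piece separately, using the monotonicity established in step (i) to control both.

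For step (i), the multiplicative factors $[x_r]_i=\min\{[p_r]_i/[p_r(V_r)]_i,\,1\}$ lie in $[0,1]$, so each scaling is pointwise contractive: $V_{r+1}\leq V_r$ entrywise, and consequently $V_s\leq U$ for every $s$. A direct calculation shows $p_r(V_{r+1})=\min\{p_r,p_r(V_r)\}$ coordinate-wise, so after the $r$-th inner iteration the $r$-th marginal is bounded above by $p_r$; pointwise monotonicity preserves this through the remaining iterations, yielding $p_r(V_m)\leq p_r$ entrywise for each processed direction. Hence $\mathrm{err}_r\geq 0$ componentwise, and since every marginal of a non-negative tensor sums to its total mass, all $\|\mathrm{err}_r\|_1$ share the common value $E:=1-\|V_m\|_1$. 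For step (ii), I would use the identity that for non-negative vectors $w_1,\ldots,w_m$ the $r$-th marginal of $\bigotimes_s w_s$ equals $w_r\cdot\prod_{s\neq r}\|w_s\|_1$. Applied to $w_s=\mathrm{err}_s$ and divided by $E^{m-1}$, this gives $p_r(\hat V-V_m)=\mathrm{err}_r$, so $p_r(\hat V)=p_r(V_m)+\mathrm{err}_r=p_r$.

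For step (iii), the first piece telescopes: using $V_{r+1}\leq V_r$,
\[
\|U-V_m\|_1 \;=\; \sum_{r}(\|V_r\|_1-\|V_{r+1}\|_1) \;=\; \sum_{r}\sum_i \bigl([p_r(V_r)]_i-[p_r]_i\bigr)_+.
\]
Since $V_r\leq U$ implies $p_r(V_r)\leq p_r(U)$ entrywise, each summand is bounded by $\sum_i([p_r(U)]_i-[p_r]_i)_+\leq\|p_r(U)-p_r\|_1$, giving $\|U-V_m\|_1\leq\sum_{r}\|p_r-p_r(U)\|_1$. The second piece equals $\|\bigotimes_s\mathrm{err}_s\|_1/E^{m-1}=E^{m}/E^{m-1}=E=\|\mathrm{err}_m\|_1$ by non-negativity of the $\mathrm{err}_s$. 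A final triangle inequality together with $L_1$-non-expansiveness of the marginal operator on non-negative tensors gives
\[
\|\mathrm{err}_m\|_1 \leq \|p_m-p_m(U)\|_1+\|p_m(U-V_m)\|_1 \leq \|p_m-p_m(U)\|_1+\|U-V_m\|_1,
\]
and combining with the bound on $\|U-V_m\|_1$ yields the claimed factor of $2$.

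The main obstacle I expect is the telescoping bookkeeping around which marginal of $U$ (as opposed to $V_r$) controls each per-step $L_1$ drop; the pointwise monotonicity $V_r\leq U$ is the linchpin that lets us replace $p_r(V_r)$ by $p_r(U)$ in the positive-part estimate, and one must be careful to apply the elementary inequality $(a)_+\leq|a|$ only after this replacement. Everything else — the marginal correctness of the rank-one correction and the $L_1$ identity for tensor products of non-negative vectors — is essentially bookkeeping once the monotonicity is in place.
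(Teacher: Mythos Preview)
Your proposal is correct and follows essentially the same route as the paper's proof: both split $\|U-\hat V\|_1$ via the triangle inequality, identify $\|U-V_m\|_1$ with the telescoping mass drop $I=\sum_r\sum_i\bigl([p_r(V_r)]_i-[p_r]_i\bigr)_+$, and compute the $L_1$ norm of the rank-one correction directly. The paper is terser---it writes the chain $I+(1-\|V_m\|_1)=2I+1-\|U\|_1$ and defers the passage from $I$ to $\sum_r\|p_r-p_r(U)\|_1$ to Lemma~7 of \cite{altschuler2017near-linear}---whereas you make the monotonicity $V_r\le U$ and the positive-part estimate explicit, yielding a self-contained argument.
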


\begin{proof}
Initially, note that for all $k=1,\dots,m$, we have
\begin{align*}
    p_k(\hat{V}) & = p_k(V_m) + p_k(\bigotimes_{r=1}^m \text{err}_r / \| \text{err}_m \|_1^{m-1}) \\
    & = p_k(V_m) + \text{err}_k  = p_k.
\end{align*}
Thus, the output of the $\hat{U}$ has the desired marginals. Now, define $I = \|U\|_1 - \|V_m\|_1$, thus,
\begin{align*}
    I = \sum_{r=1}^m \sum_{i=1}^n ([p_r(V_r)]_i - [p_r]_i)_+
\end{align*}
Moreover, we have
\begin{align*}
    &\|\hat{V} - U \| \leq I + \|\bigotimes_{r=1}^m \text{err}_r \|/ \| \text{err}_m \|_1^{m-1}=\\ 
    &  I + 1 - \|V_m\|_1  = 2I + 1 - \|U\|_1  = 2\sum_{r=1}^m\|p_r - p_r(U ) \|_1,
\end{align*}
where the last line follows the same arguments as the proof of Lemma~$7$ in~\cite{altschuler2017near-linear}.

\end{proof}


\section{Complexity of Multimarginal OT
}\label{sec:complexity}
In this section, we prove the computational complexity of finding a $\eps$-solution for the original \textit{non-regularized} MOT problem~\eqref{prob:multi_OT}, i.e. we estimate the complexity to find $\widehat{X}$ satisfying all the constraints in \eqref{prob:multi_OT} and also satisfying
\begin{equation}
    \la  C,\widehat X  \ra \leq \la C,X^*\ra +\eps,
\end{equation}
where $X^*$ is an optimal solution for \eqref{prob:multi_OT}.The approximation is produced by Algorithm \ref{Alg:MOTbyAAM} below.

To obtain its complexity, we combine all the above building blocks, i.e., analysis of the PD-AAM algorithm and estimates for $R$ and $L$, and the rounding procedure.

\begin{algorithm}[tb]
  \caption{Approximate MOT by PD-AAM}
  \label{Alg:MOTbyAAM}
  \begin{algorithmic}[1]
    \REQUIRE{Accuracy $\e$.}
    \STATE Set $\gamma = \frac{\e}{2 m\ln n}$, $\e' = \frac{\e}{8 \|C\|_\infty}$. 
    \STATE Define $\tilde{p}_k=\left(1 - \frac{\e'}{4m}\right)p_k +  \frac{\e'}{4mn} \one_n $, $k=1,...,m$. 
    \STATE Apply PD-AAM to the dual problem \eqref{mot-dual} with marginals $\tilde{p}_k$, $k=1,...,m$ until the stopping criterion 
    $2\sum_{k=1}^m \|p_k(\widehat{X}^t)- \tilde{p}_k\|_1 + F(\hat{X}^t) + \phi(\eta^t) \leq \e/2$.
    \STATE Find $\widehat{X}$ as the projection of $\widehat{X}^t$ on $\{  X \in \R_+^{n \times \ldots \times n}, \;\; 
          p_k(X) = p_k, \ \forall k = 1, \dots, m \}$ by the Algorithm \ref{alg:round}.
    \ENSURE $\widehat{X}$.
  \end{algorithmic}
\end{algorithm}

To adapt Algorithm \ref{PDAAM-2} to Problem \eqref{mot-dual}, one should replace \textit{Step 4} with:
Choose $I=\argmax\limits_{i\in\{1,\ldots,m\}} \left \|\frac{\partial \phi }{\partial u_i}(\theta^t)\right \|_2$, 
and \textit{Step 5} with: Set 
            \[\eta^{t+1}_{i} = \Big \{ \begin{array}{ll}
                \theta^{t}_{i}+\ln p_{i}-\ln p_{i}(B(\theta^t)), & i = I \\ 
                \theta^t_i, & \text{otherwise}.
            \end{array} \]

\begin{theorem}
The output $\hat{X}$ of Algorithm~\ref{Alg:MOTbyAAM} is an $\eps$-solution for the original \textit{non-regularized} MOT problem~\eqref{prob:multi_OT}, e.g.
\begin{equation}
\label{eq:non_reg_sol_def}
    \la  C,\widehat X  \ra \leq \la C,X^*\ra +\eps.
\end{equation}
\end{theorem}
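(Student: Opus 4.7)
The plan is to compare $\hat{X}$ with $X^*$ by introducing two intermediate optima and splitting the suboptimality into four telescoping pieces, each controlled by a different device in the algorithm. Let $\tilde{X}^*$ denote the optimum of the non-regularized problem with the perturbed marginals $\tilde{p}_k$, and let $\tilde{X}^*_\gamma$ denote the optimum of the entropy-regularized problem with those same marginals. Then
\begin{align*}
\langle C, \hat{X} - X^*\rangle &= \langle C, \hat{X} - \hat{X}^t\rangle + \langle C, \hat{X}^t - \tilde{X}^*_\gamma\rangle \\
&\quad + \langle C, \tilde{X}^*_\gamma - \tilde{X}^*\rangle + \langle C, \tilde{X}^* - X^*\rangle,
\end{align*}
and I would label these four pieces as rounding (A), optimization (B), regularization (C), and marginal perturbation (D).

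For (A) I would invoke Lemma~\ref{Lm:Rounding} to obtain $\|\hat{X} - \hat{X}^t\|_1 \leq 2\sum_k \|p_k(\hat{X}^t) - p_k\|_1$, split this by the triangle inequality into a piece controlled by the stopping criterion and a piece bounded by $\varepsilon'/(2m)$ per marginal directly from the definition of $\tilde{p}_k$, then multiply by $\|C\|_\infty$; the choice $\varepsilon' = \varepsilon/(8\|C\|_\infty)$ keeps this of order $\varepsilon$. Pieces (B) and (C) are best handled jointly: using $\langle C, \tilde{X}^*\rangle \geq F(\tilde{X}^*) \geq F(\tilde{X}^*_\gamma)$ (the first step from $H \geq 0$ on the $n^m$-simplex, the second since $\tilde{X}^*_\gamma$ minimizes $F$) yields
\begin{align*}
\langle C, \hat{X}^t - \tilde{X}^*\rangle &= F(\hat{X}^t) + \gamma H(\hat{X}^t) - \langle C, \tilde{X}^*\rangle \\
&\leq \bigl(F(\hat{X}^t) + \phi(\eta^t)\bigr) + \gamma m \ln n,
\end{align*}
where weak duality bounds the bracketed term via the stopping criterion, while $H(\hat{X}^t) \leq m\ln n$ and the choice $\gamma = \varepsilon/(2m \ln n)$ handle the other term. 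Merging (B) and (C) in this way is essential to avoid double-counting the regularization cost $\gamma m \ln n$.

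For (D) I would exhibit an explicit feasible competitor: taking $V = n^{-m}\mathbf{1}$ uniform and $X' = (1 - \varepsilon'/(4m))X^* + (\varepsilon'/(4m))V$, a direct check gives $p_k(X') = \tilde{p}_k$, so the optimality of $\tilde{X}^*$ implies $\langle C, \tilde{X}^* - X^*\rangle \leq (\varepsilon'/(4m))\|C\|_\infty$, which is negligible under the chosen $\varepsilon'$. Summing the four contributions with the stated parameter choices then gives $\langle C, \hat{X}\rangle \leq \langle C, X^*\rangle + \varepsilon$.

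The main obstacle is the bookkeeping of constants: the $\|C\|_\infty$ factor arising in (A) from the rounding bound must be absorbed into the stopping-criterion threshold, so that the $2\sum_k \|p_k(\hat{X}^t) - \tilde{p}_k\|_1$ piece of the stopping criterion effectively scales like $\varepsilon/\|C\|_\infty$ rather than $\varepsilon$; this implicit $\|C\|_\infty$ scaling tightens the iteration count that Theorem~\ref{Th:PD_rate} gives (through the bounds on $L$ and $R$ from Sections~III-B and III-C) but does not change the high-level structure of the argument.
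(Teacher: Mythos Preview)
Your proof is correct and tracks the paper's argument closely. The paper uses the same telescoping idea but with one fewer intermediate point: it writes $\langle C,\hat X\rangle = \langle C,X^*\rangle + \langle C,X^*_\gamma-X^*\rangle + \langle C,\hat X^t-X^*_\gamma\rangle + \langle C,\hat X-\hat X^t\rangle$, bounds the middle piece exactly as in your merged (B)+(C) step (arriving at $F(\hat X^t)+\phi(\eta^t)+\gamma m\ln n$), and handles the last piece via Lemma~\ref{Lm:Rounding} and H\"older just as you propose. The one substantive difference is how the marginal perturbation is absorbed: the paper never introduces your $\tilde X^*$ and instead pushes the perturbation entirely into the rounding term by writing $\sum_k\|p_k(\hat X^t)-p_k\|_1\le\sum_k(\|p_k(\hat X^t)-\tilde p_k\|_1+\|\tilde p_k-p_k\|_1)\le\varepsilon'$, whereas you isolate it as a separate piece (D) and control it via the explicit competitor $X'=(1-\varepsilon'/(4m))X^*+(\varepsilon'/(4m))n^{-m}\mathbf 1$. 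Your route is arguably cleaner---in particular it sidesteps the paper's assertion that $\langle C,X^*_\gamma-X^*\rangle\le 0$, which is not obviously justified once $X^*_\gamma$ is taken with the perturbed marginals---but both arguments land on the same constants and the same final bound.
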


\begin{proof}
By Lemma \ref{Lm:Rounding}, $\widehat{X}$ is a feasible point for Problem~\eqref{prob:multi_OT}. 
Let us estimate the objective residual. We have
\begin{multline}
\label{eq:Th:OTbySinkhComplPr1}
    \la C, \widehat{X} \ra =  \la C, X^* \ra + \la C, X^*_\gamma - X^* \ra + \la C, \widehat{X}^t - X^*_\gamma \ra 
    \\ + \la C, \widehat{X} - \widehat{X}^t \ra 
    \le \la C, X^* \ra + \gamma m \ln n + F(\hat{X}^t) + \phi(\eta^t) 
    \\
     + 2 \sum_{k=1}^m \left\|p_k(\widehat{X}^t)- {p}_k\right\|_1 \|C\|_\infty,
\end{multline} 
where $\widehat{X}^t$ is the output of Algorithm \ref{PDAAM-2}, $\widehat X$ is a projection of $\widehat{X}^t$ by Algorithm \ref{alg:round} on the feasible set, $X^*$ is a solution to the non-regularized multimarginal OT problem \eqref{prob:multi_OT}, $X_\gamma^*$ is a solution to the entropy-regularized multimarginal OT problem \eqref{eq:EMOT_Pr_St}.
To obtain the last inequality we used the fact that the Entropy on the standard simplex in the dimension $n^m$ belongs to the interval $-H(X) \in [-m\ln n,0]$, and, hence, $\la C, X^*_\gamma - X^* \ra \leq 0$ and
\begin{align}
\label{eq:OTbyGDPr3}
\la C, \widehat{X}^t &- X_{\gamma}^* \ra =  (\la C, \widehat{X}^t \ra - \gamma H(\widehat{X}^t)) \notag \\
 &- (\la C, X_{\gamma}^* \ra - \gamma H(X_{\gamma}^*)) + \gamma (H(\widehat{X}^t)-H(X_{\gamma}^*)) \notag \\
& \stackrel{\eqref{eq:gen_obj_bound}}{\leq} F(\hat{X}^t) + \phi(\eta^t) +  \gamma m \ln n.
\end{align}
Finally, by the H\"older inequality and Lemma \ref{Lm:Rounding}, 
\small
\begin{equation*}
    \la C, \widehat{X} - \widehat{X}^t \ra \leq \|C\|_\infty \|\widehat{X} - \widehat{X}^t\|_1 
    \leq 2\|C\|_\infty \sum_{k=1}^m \|p_k(\widehat{X}^t) - p_k\|_1.
\end{equation*}
\normalsize
This finishes the proof of inequality \eqref{eq:Th:OTbySinkhComplPr1}.

Further, we have 
\small
\begin{equation*}
    \sum_{k=1}^m \|p_k(\widehat{X}^t)- {p}_k\|_1 \le \sum_{k=1}^m \left( \left\|p_k(\widehat{X}^t)- \tilde{p}_k\right\|_1+ \|\tilde{p}_k - p_k\|_1 \right) \leq \e',
\end{equation*}
\normalsize
by the construction of $\tilde{p}_k$ and the stopping criterion in step 3 of Algorithm \ref{Alg:MOTbyAAM}. Combining this, \eqref{eq:Th:OTbySinkhComplPr1}, the choice of~$\gamma$ and~$\e'$ as well as the stopping criterion in step 3 of Algorithm \ref{Alg:MOTbyAAM}, we obtain that \eqref{eq:non_reg_sol_def} holds.
\end{proof}
It remains to estimate the complexity of the algorithm. By Theorem \ref{Th:PD_rate}, we obtain that
\small
\begin{multline*}
\sum_{k=1}^m \left\|p_k(\widehat{X}^t)- \tilde{p}_t\right\|_1 \leq \sqrt{mn}\|\mathcal{A} \widehat{X}^t - b \|_2 \leqslant \frac{8m^\frac{3}{2}n^\frac{1}{2}LR}{t^2}
\\
\leq \frac{8m^\frac{3}{2}n^\frac{1}{2}}{t^2}\cdot \frac{m \cdot 2m \ln n}{\e} \cdot \frac{\sqrt{mn}\left(\|C\|_\infty+\frac{\e}{4m \ln n}\ln{\frac{4 m  n \cdot 8 \|C\|_{\infty}}{\e}}\right) }{2}
\\
= \frac{8m^{4}n \|C\|_{\infty}\ln n}{\e t^2}\left(1+\frac{\e}{4m \|C\|_{\infty} \ln n}\ln{\frac{32 m  n \|C\|_{\infty}}{\e}}\right),
\end{multline*}
\normalsize
where the operator $\mathcal{A}$ is defined in Sect \ref{S:L_Bound} and we used that by the choice of $\tilde{p}_k$, $\min\limits_{i,j}\{[p_i]_j\} \geq \frac{\e'}{4mn}$.
At the same time,
\small
\begin{multline*}
F(\hat{X}^t) + \phi(\eta^t) \leqslant \frac{2mLR^2}{t^2}  \\
\leq \frac{2m}{t^2} \cdot \frac{m \cdot 2m \ln n}{\e} \cdot \frac{mn}{4}\left(\|C\|_\infty+\frac{\e}{4m \ln n}\ln{\frac{32 m  n  \|C\|_{\infty}}{\e}}\right)^2 \\
= \frac{m^4 n \|C\|_\infty ^2\ln n}{t^2 \e} \left(1+\frac{\e}{4m \|C\|_\infty \ln n}\ln{\frac{32 m  n  \|C\|_{\infty}}{\e}}\right)^2.
\end{multline*}
\normalsize
Let us denote $\delta_\e = 1+\frac{\e}{4m \|C\|_\infty \ln n}\ln{\frac{32 m  n  \|C\|_{\infty}}{\e}}$. Since $\e$ is small and $m,n$ are large, we can think of this quantity as $\delta_\e = O(1)$. Then, to satisfy the stopping criterion in step 3 of Algorithm \ref{Alg:MOTbyAAM} we need to take 
\begin{multline*}
t \geq \sqrt{\frac{128m^{4}n \|C\|_{\infty}^2\delta_\e\ln n }{\e^2}} = \widetilde{O}\left( \frac{m^{2}n^{1/2} \|C\|_{\infty} }{\e}\right), \text{and}
\end{multline*}
\begin{multline*}
t \geq \sqrt{\frac{4m^{4}n \|C\|_{\infty}^2\delta_\e^2\ln n }{\e^2}} = \widetilde{O}\left( \frac{m^{2}n^{1/2} \|C\|_{\infty} }{\e}\right).
\end{multline*}
Since in each iteration we need to calculate the full gradient of the dual objective, which amounts to calculating $m$ marginals $p_k(B(U))$, $k=1,...,m$ of the $m$-dimensional tensor $B(U)$, the cost of this operation is   $O(mn^{m})$ and it dominates the complexity of other operations in each iteration. This gives the following theorem and the main result of the paper.
\begin{theorem}
The computational complexity of finding an $\varepsilon$-approximate solution for the non-regularized MOT problem using Algorithm \ref{Alg:MOTbyAAM} is
\[
\widetilde{O} \left(\frac{m^3n^{m+1/2}\|C\|_{\infty}}{\varepsilon} \right).
\]
\end{theorem}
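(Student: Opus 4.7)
The plan is to combine the iteration complexity bound for Algorithm \ref{PDAAM-2} from Theorem \ref{Th:PD_rate} with the explicit bounds on the smoothness constant $L$ and the dual solution radius $R$ derived in Sections III-B and III-C, and then multiply the resulting number of iterations by the per-iteration arithmetic cost of Algorithm \ref{Alg:MOTbyAAM}. The correctness statement (that the output $\widehat{X}$ is $\varepsilon$-optimal for the non-regularized problem) has already been proved above, so all that remains is to quantify the running time.

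First, I would substitute the choice $\gamma = \varepsilon / (2m \ln n)$ from step 1 of Algorithm \ref{Alg:MOTbyAAM} into the bound $L \leq m/\gamma$ from Section III-B, yielding $L = O(m^2 \ln n / \varepsilon)$. Next, I would use the construction of the perturbed marginals $\tilde{p}_k$ in step 2, which guarantees $\min_{i,j}\{[\tilde p_i]_j\} \geq \varepsilon'/(4mn)$ with $\varepsilon' = \varepsilon/(8\|C\|_\infty)$, and plug this into the bound on $R$ from the second lemma of Section III-C. This gives
\[
R \leq \frac{\sqrt{mn}}{2}\left(\|C\|_\infty + \frac{\varepsilon}{4m \ln n} \ln \frac{32 m n \|C\|_\infty}{\varepsilon}\right) = \widetilde O(\sqrt{mn}\, \|C\|_\infty).
\]

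With $L$ and $R$ in hand, Theorem \ref{Th:PD_rate} applied to the rescaled marginals yields the two bounds reproduced in the excerpt: the constraint violation $\sum_k\|p_k(\widehat X^t)-\tilde p_k\|_1 \leq \sqrt{mn}\,\|\mathcal{A}\widehat X^t - b\|_2$ decays like $8m^{3/2}n^{1/2}LR/t^2$, and the duality gap $F(\widehat X^t)+\phi(\eta^t)$ decays like $2mLR^2/t^2$. Inserting the bounds on $L$ and $R$ and absorbing the logarithmic factor $\delta_\varepsilon = 1 + O(\varepsilon \log(mn/\varepsilon) / (m\|C\|_\infty \ln n)) = \widetilde O(1)$, both quantities are controlled by $\varepsilon/2$ once
\[
t = \widetilde O\!\left(\frac{m^2 n^{1/2}\|C\|_\infty}{\varepsilon}\right),
\]
which is the iteration complexity required to meet the stopping criterion of step 3.

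Finally, I would bound the per-iteration cost. Each iteration of the adapted PD-AAM requires (i) evaluating $\phi$ and $\nabla \phi$ at $\theta^t$, which reduces to computing all $m$ marginals $p_k(B(\theta^t))$ of the dense $n^m$-tensor $B(\theta^t)$, costing $O(mn^m)$ operations, (ii) a one-dimensional line-search in $\beta_t$ and in $a_{t+1}$, which require only $O(mn^m)$ additional tensor evaluations up to logarithmic factors, and (iii) the block update in \eqref{u-iter}, which is $O(n)$. The post-processing rounding via Algorithm \ref{alg:round} is executed once and costs $O(mn^m)$, so it is negligible. Thus the dominant per-iteration cost is $O(mn^m)$, and multiplying by the iteration count gives the claimed total computational complexity
\[
\widetilde O\!\left(\frac{m^3 n^{m+1/2}\|C\|_\infty}{\varepsilon}\right).
\]
The only mildly delicate part is keeping the logarithmic factors in $\delta_\varepsilon$ under the $\widetilde O(\cdot)$ umbrella; otherwise the argument is a direct chaining of the estimates developed in Sections III-A through III-D.
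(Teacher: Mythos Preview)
Your proposal is correct and follows essentially the same route as the paper: substitute the algorithmic choice of $\gamma$ into $L\leq m/\gamma$, use the perturbed marginals to control $\min_{i,j}[\tilde p_i]_j$ in the bound for $R$, invoke Theorem~\ref{Th:PD_rate} to obtain the $\widetilde O(m^2n^{1/2}\|C\|_\infty/\varepsilon)$ iteration count via the two displayed inequalities, and then multiply by the $O(mn^m)$ per-iteration cost of computing all $m$ marginals of $B(\theta^t)$. The paper's derivation is essentially identical, including the introduction of the $\delta_\varepsilon$ factor absorbed into $\widetilde O(\cdot)$.
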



We now discuss the scalability of the proposed algorithm. As already mentioned, the most expensive operation on each iteration is the calculation of $m$ marginals $p_k(B(U))$ of the $m$-dimensional tensor $B(U)$. This operation can be organized in parallel if we store this tensor in shared memory and allow $m$ workers to access it. Then, they can independently calculate all the marginals. The total amount of arithmetic operations remains the same, but the work time is now proportional to $n^m$ rather than $mn^m$.  

Next, we compare our complexity results with the estimates in the preprint \cite{2019arXiv191000152L}. By inspecting their Algorithm 2 and Algorithm 5, we see that similarly to our algorithm, in each iteration, they need to calculate all the marginals  (which they denote by $r_i(B(\beta))$) to choose the block $I$, which will be updated. The complexity of this operation dominates the complexity of other operations in each step. Thus, since each iteration in their algorithms and our algorithm is the same, we compare the iteration complexity of the algorithms. The iteration complexity of our algorithm is $\widetilde{O}\left(  {m^{2}n^{1/2} \|C\|_{\infty} }/{\e}\right)$. The iteration complexity of the multimarginal Sinkhorn's algorithm \cite{2019arXiv191000152L} is $\widetilde{O}\left(  {m^{3} \|C\|_{\infty}^2 }/{\e^2}\right)$, which has worse dependence on $\e$ and $m$ than our bound. The claimed iteration complexity of multimarginal RANDKHORN algorithm in \cite{2019arXiv191000152L} is $\widetilde{O}\left(  {m^{8/3}n^{1/3} \|C\|_{\infty}^{4/3} }/{\e}\right)$, which has worse dependence on $m$ and $ \|C\|_{\infty}$ than our bound. Moreover, the multimarginal RANDKHORN is a randomized algorithm, and its complexity is estimated on average, whereas our algorithm and complexity are deterministic.



\section{Experiments}\label{sec:experiments}
This section provides a numerical comparison of multimarginal Sinkhorn's algorithm from \cite{2019arXiv191000152L} with our AAM method. We performed experiments using randomly chosen vectors $p_i \in \Delta_n$ and tensor $C \in \R_+^{n^m}$. We slightly modified the smaller values of $p_i$ as described above to lower bound their minimal value. We choose several values of accuracy $\eps \in [0.25, 0.0125]$, and run the methods until the stopping criterion was reached. One can see that our AAM algorithm outperforms multimarginal Sinkhorn's algorithm from \cite{2019arXiv191000152L}. \footnote{ The code available \url{https://rb.gy/siirke}}. Unfortunately, we were not able to implement the multimarginal RANDKHORN algorithm since its stopping criterion $\bar{E}_t > \varepsilon'$ depends on \textit{expected} residual in the constraints given in \cite[Eq. (28)]{2019arXiv191000152L}, which is unavailable in practice.

\begin{figure}[ht]
\begin{center}
\centerline{\includegraphics[width=0.97\columnwidth]{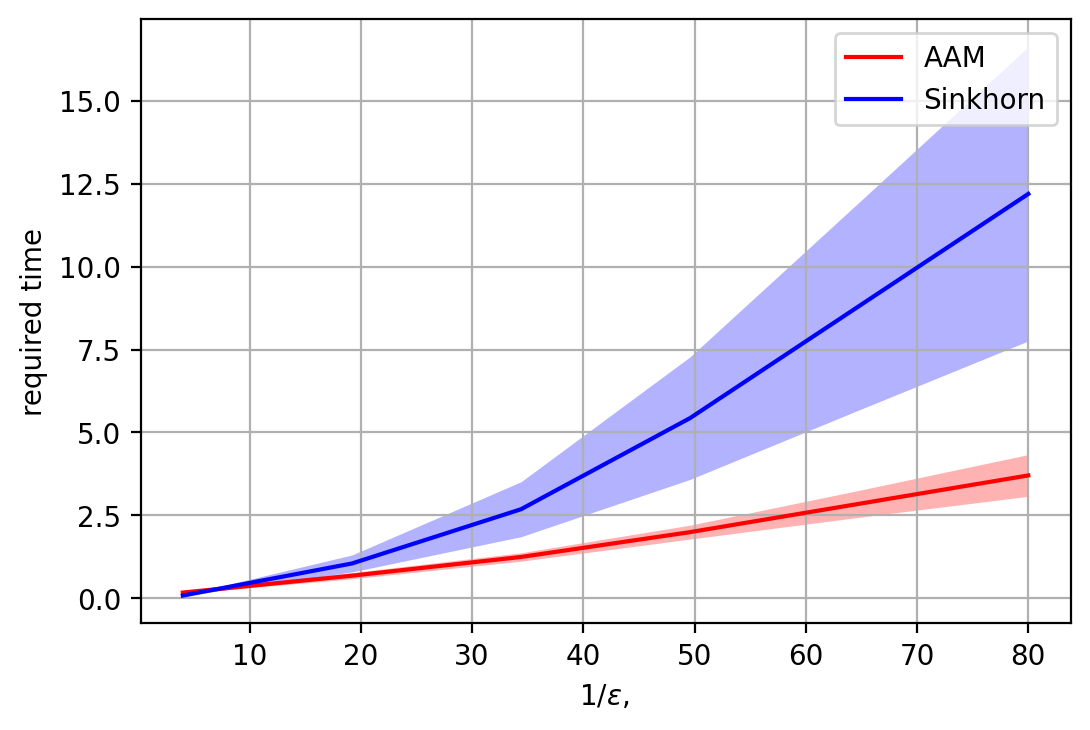}}
\vspace{-0.4cm}
\caption{Performance comparison between multimarginal Sinkhorn's algorithm and Algorithm \ref{Alg:MOTbyAAM} ($n= 15$, $m=4$).}
\label{exper-1}
\end{center}
\end{figure}

\begin{figure}[ht]
\begin{center}
\centerline{\includegraphics[width=0.97\columnwidth]{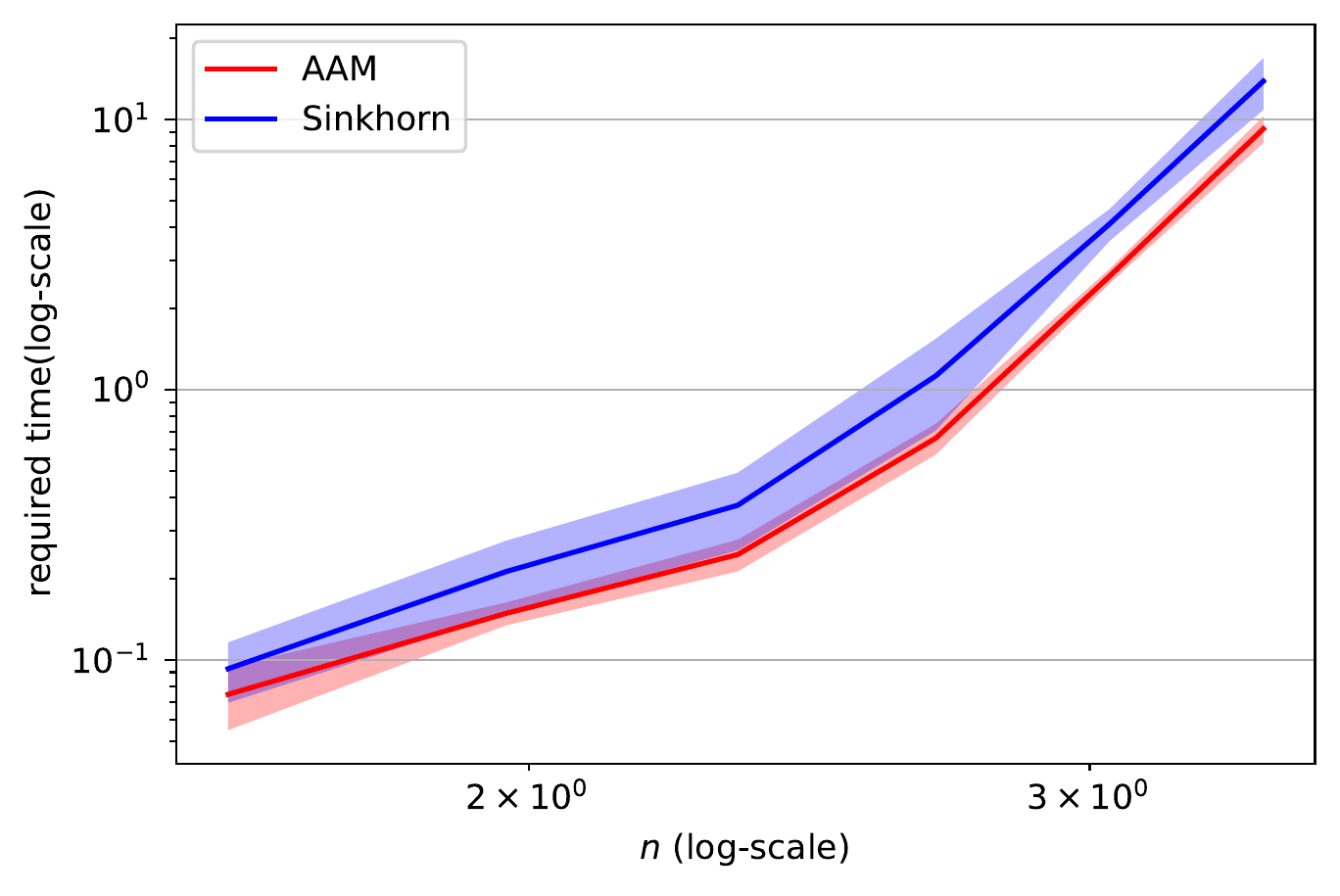}}
\vspace{-0.4cm}
\caption{Performance comparison between multimarginal Sinkhorn's algorithm and Algorithm \ref{Alg:MOTbyAAM} ($m=4$, $\eps=0.05$).}
\label{exper-2}
\end{center}
\vspace{-0.2cm}
\end{figure}

\vspace{-0.2cm}

\section{Conclusions}\label{sec:conclusions}

We provide a novel algorithm for the computation of approximate solutions to the multimarginal optimal transport problem. Our results are based on a new primal-dual analysis of the entropy regularized optimal transport problem. We show that the iteration complexity of our algorithm is better than the state-of-the-art methods in a large set of problem regimes to the number of distributions, dimension of the distributions, and desired accuracy.

As a byproduct of our analysis, given that the Wasserstein barycenter of a set of distributions can be recovered from the optimal multimarginal transport plan~\cite{benamou2015iterative}, we provide some evidence of an exponential complexity bound for the computation of the free-support barycenter which is known to be a non-convex problem. 

Future work will include the study of fully decentralized approaches and extensive experimental results for applications related to signal processing.

\vspace{-0.0cm}



\bibliography{references}

\begin{thebibliography}{10}
\providecommand{\url}[1]{#1}
\csname url@samestyle\endcsname
\providecommand{\newblock}{\relax}
\providecommand{\bibinfo}[2]{#2}
\providecommand{\BIBentrySTDinterwordspacing}{\spaceskip=0pt\relax}
\providecommand{\BIBentryALTinterwordstretchfactor}{4}
\providecommand{\BIBentryALTinterwordspacing}{\spaceskip=\fontdimen2\font plus
\BIBentryALTinterwordstretchfactor\fontdimen3\font minus
  \fontdimen4\font\relax}
\providecommand{\BIBforeignlanguage}[2]{{%
\expandafter\ifx\csname l@#1\endcsname\relax
\typeout{** WARNING: IEEEtran.bst: No hyphenation pattern has been}%
\typeout{** loaded for the language `#1'. Using the pattern for}%
\typeout{** the default language instead.}%
\else
\language=\csname l@#1\endcsname
\fi
#2}}
\providecommand{\BIBdecl}{\relax}
\BIBdecl

\bibitem{gramfort2015fast}
A.~Gramfort, G.~Peyr{\'e}, and M.~Cuturi, ``Fast optimal transport averaging of
  neuroimaging data,'' in \emph{International Conference on Information
  Processing in Medical Imaging}.\hskip 1em plus 0.5em minus 0.4em\relax
  Springer, 2015, pp. 261--272.

\bibitem{arjovsky2017wasserstein}
M.~Arjovsky, S.~Chintala, and L.~Bottou, ``Wasserstein gan,''
  \emph{arXiv:1701.07875}, 2017.

\bibitem{asoodeh2018curvature}
S.~Asoodeh, T.~Gao, and J.~Evans, ``Curvature of hypergraphs via multi-marginal
  optimal transport,'' in \emph{CDC 2018}, 2018, pp. 1180--1185.

\bibitem{chen2016optimal}
Y.~Chen, T.~T. Georgiou, and M.~Pavon, ``Optimal transport over a linear
  dynamical system,'' \emph{IEEE Transactions on Automatic Control}, vol.~62,
  no.~5, pp. 2137--2152, 2016.

\bibitem{elvander2019multi}
F.~Elvander, I.~Haasler, A.~Jakobsson, and J.~Karlsson, ``Multi-marginal
  optimal mass transport with partial information,'' \emph{arXiv:1905.03847},
  2019.

\bibitem{pass2015multi}
B.~Pass, ``Multi-marginal optimal transport: theory and applications,''
  \emph{Math. Model. \& Num. Analys.}, vol.~49, no.~6, pp. 1771--1790, 2015.

\bibitem{Villani-2003-Topic}
C.~Villani, \emph{Topics in Optimal Transportation}, ser. Graduate studies in
  mathematics.\hskip 1em plus 0.5em minus 0.4em\relax American Mathematical
  Society, 2003.

\bibitem{dvurechensky2018computational}
P.~Dvurechensky, A.~Gasnikov, and A.~Kroshnin, ``Computational optimal
  transport: Complexity by accelerated gradient descent is better than by
  {S}inkhorn’s algorithm,'' in \emph{Proceedings of the 35th International
  Conference on Machine Learning}, vol.~80, 2018, pp. 1367--1376.

\bibitem{kroshnin2019complexity}
A.~Kroshnin, N.~Tupitsa, D.~Dvinskikh, P.~Dvurechensky, A.~Gasnikov, and
  C.~Uribe, ``On the complexity of approximating {W}asserstein barycenters,''
  in \emph{Proceedings of the 36th International Conference on Machine
  Learning}, vol.~97, 2019, pp. 3530--3540.

\bibitem{lin2019efficient}
T.~Lin, N.~Ho, and M.~Jordan, ``On efficient optimal transport: An analysis of
  greedy and accelerated mirror descent algorithms,'' in \emph{Proceedings of
  the 36th International Conference on Machine Learning}, vol.~97, 2019, pp.
  3982--3991.

\bibitem{jambulapati2019direct}
A.~Jambulapati, A.~Sidford, and K.~Tian, ``A direct ${O}(1/\varepsilon)$
  iteration parallel algorithm for optimal transport,'' in \emph{NeurIPS 2019},
  2019, pp. 11\,359--11\,370.

\bibitem{ambrosio2013user}
L.~Ambrosio and N.~Gigli, ``A user’s guide to optimal transport,'' in
  \emph{Modelling and optimisation of flows on networks}.\hskip 1em plus 0.5em
  minus 0.4em\relax Springer, 2013.

\bibitem{mccann2012glimpse}
R.~J. McCann, ``A glimpse into the differential topology and geometry of
  optimal transport,'' \emph{arXiv:1207.1867}, 2012.

\bibitem{abraham2017tomographic}
I.~Abraham, R.~Abraham, M.~Bergounioux, and G.~Carlier, ``Tomographic
  reconstruction from a few views: a multi-marginal optimal transport
  approach,'' \emph{Appl. Math. \& Opt.}, vol.~75, no.~1, pp. 55--73, 2017.

\bibitem{cao2019multi}
J.~Cao, L.~Mo, Y.~Zhang, K.~Jia, C.~Shen, and M.~Tan, ``Multi-marginal
  wasserstein gan,'' in \emph{NeurIPS}, 2019, pp. 1774--1784.

\bibitem{ekeland2005optimal}
I.~Ekeland, ``An optimal matching problem,'' \emph{ESAIM: Control, Optimisation
  and Calculus of Variations}, vol.~11, no.~1, pp. 57--71, 2005.

\bibitem{parr1980density}
R.~G. Parr, ``Density functional theory of atoms and molecules,'' in
  \emph{Horizons of Quantum Chemistry}.\hskip 1em plus 0.5em minus 0.4em\relax
  Springer, 1980, pp. 5--15.

\bibitem{benamou2015iterative}
J.-D. Benamou, G.~Carlier, M.~Cuturi, L.~Nenna, and G.~Peyré, ``Iterative
  bregman projections for regularized transportation problems,'' \emph{SIAM
  Journal on Scientific Computing}, vol.~37, no.~2, pp. A1111--A1138, 2015.

\bibitem{2019arXiv191000152L}
T.~{Lin}, N.~{Ho}, M.~{Cuturi}, and M.~I. {Jordan}, ``{On the Complexity of
  Approximating Multimarginal Optimal Transport},'' \emph{arXiv e-prints},
  2019, arXiv:1910.00152.

\bibitem{cuturi2013sinkhorn}
M.~Cuturi, ``Sinkhorn distances: Lightspeed computation of optimal transport,''
  in \emph{Advances in Neural Information Processing Systems 26}, 2013, pp.
  2292--2300.

\bibitem{nesterov1994interior}
Y.~Nesterov and A.~Nemirovskii, \emph{Interior-point polynomial algorithms in
  convex programming}.\hskip 1em plus 0.5em minus 0.4em\relax SIAM, 1994.

\bibitem{Lee-2014-Path}
Y.~T. Lee and A.~Sidford, ``Path finding methods for linear programming:
  Solving linear programs in $\tilde{O}(\sqrt{\text{rank}})$ iterations and
  faster algorithms for maximum flow,'' \emph{FOCS 2014}, pp. 424--433, 2014.

\bibitem{tupitsa2020multimarginal}
N.~Tupitsa, P.~Dvurechensky, A.~Gasnikov, and C.~A. Uribe, ``Multimarginal
  optimal transport by accelerated gradient descent,'' \emph{arXiv:2004.02294},
  2020.

\bibitem{nesterov2005smooth}
Y.~Nesterov, ``Smooth minimization of non-smooth functions,''
  \emph{Mathematical Programming}, vol. 103, no.~1, pp. 127--152, 2005.

\bibitem{dvurechensky2016primal-dual}
P.~Dvurechensky, A.~Gasnikov, E.~Gasnikova, S.~Matsievsky, A.~Rodomanov, and
  I.~Usik, ``Primal-dual method for searching equilibrium in hierarchical
  congestion population games,'' in \emph{Supplementary Proceedings of
  Conference on Discrete Optimization and Operations Research (DOOR 2016)},
  2016, pp. 584--595.

\bibitem{dvurechensky2018decentralize}
P.~Dvurechensky, D.~Dvinskikh, A.~Gasnikov, C.~A. Uribe, and A.~Nedić,
  ``Decentralize and randomize: Faster algorithm for {W}asserstein
  barycenters,'' in \emph{Advances in Neural Information Processing Systems
  31}, 2018, pp. 10\,783--10\,793.

\bibitem{uribe2018distributed}
C.~A. {Uribe}, D.~{Dvinskikh}, P.~{Dvurechensky}, A.~{Gasnikov}, and
  A.~{Nedi\'c}, ``Distributed computation of {W}asserstein barycenters over
  networks,'' in \emph{IEEE Conference on Decision and Control}, 2018, pp.
  6544--6549.

\bibitem{guminov2019accelerated}
S.~V. Guminov, Y.~E. Nesterov, P.~E. Dvurechensky, and A.~V. Gasnikov,
  ``Accelerated primal-dual gradient descent with linesearch for convex,
  nonconvex, and nonsmooth optimization problems,'' \emph{Doklady Mathematics},
  vol.~99, no.~2, pp. 125--128, 2019.

\bibitem{dvurechensky2020stable}
P.~Dvurechensky, A.~Gasnikov, S.~Omelchenko, and A.~Tiurin, ``A stable
  alternative to {S}inkhorn's algorithm for regularized optimal transport,'' in
  \emph{Mathematical Optimization Theory and Operations Research}, A.~Kononov,
  M.~Khachay, V.~A. Kalyagin, and P.~Pardalos, Eds.\hskip 1em plus 0.5em minus
  0.4em\relax Cham: Springer International Publishing, 2020, pp. 406--423.

\bibitem{nesterov2020primal-dual}
\BIBentryALTinterwordspacing
Y.~Nesterov, A.~Gasnikov, S.~Guminov, and P.~Dvurechensky, ``Primal-dual
  accelerated gradient methods with small-dimensional relaxation oracle,''
  \emph{Optimization Methods and Software}, 2020. [Online]. Available:
  \url{https://doi.org/10.1080/10556788.2020.1731747}
\BIBentrySTDinterwordspacing

\bibitem{2019arXiv190603622G}
S.~{Guminov}, P.~{Dvurechensky}, N.~{Tupitsa}, and A.~{Gasnikov},
  ``{Accelerated Alternating Minimization, Accelerated Sinkhorn's Algorithm and
  Accelerated Iterative Bregman Projections},'' \emph{arXiv e-prints}, 2019,
  arXiv:1906.03622.

\bibitem{altschuler2017near-linear}
J.~Altschuler, J.~Weed, and P.~Rigollet, ``Near-linear time approxfimation
  algorithms for optimal transport via sinkhorn iteration,'' in \emph{NeurIPS
  2017}.\hskip 1em plus 0.5em minus 0.4em\relax Curran Associates, Inc., 2017,
  pp. 1961--1971.

\end{thebibliography}
\bibliographystyle{IEEEtran}

\end{document}